\documentclass[11pt,a4paper]{amsart}
\usepackage{amsmath,amsfonts,amsthm,amsopn,color,amssymb,enumitem}
\usepackage[a4paper]{geometry}
\usepackage{palatino}
\usepackage{graphicx}
\usepackage[colorlinks=true]{hyperref}
\hypersetup{urlcolor=blue, citecolor=red, linkcolor=blue}

\usepackage{relsize}
\usepackage{esint}
\usepackage{verbatim}
\usepackage{mathrsfs}
\usepackage{xcolor}
\usepackage{tikz}

\newcommand{\RN}{{\mathbb{R}^N}}

\newcommand{\RQ}{{\mathbb{R}^4}}

\newcommand{\QM}{4-\frac{\mu}{2}}
\newcommand{\TM}{3-\frac{\mu}{2}}
\newcommand{\DM}{2-\frac{\mu}{2}}
\newcommand{\DQ}{2-\frac{\mu}{4}}
\newcommand{\UQ}{1-\frac{\mu}{4}}

\newcommand{\Ud}{{\mathcal{U}_{\lambda, \xi}}}
\newcommand{\Uj}{{\mathcal{U}_{\lambda, \xi_j}}}
\newcommand{\Ui}{{\mathcal{U}_{\lambda, \xi_i}}}
\newcommand{\U}{{\mathcal{U}_{\lambda, \xi_1}}}

\newcommand{\Zo}{{Z^0_{\lambda, \xi_1}}}
\newcommand{\Zj}{{Z^0_{\lambda, \xi_j}}}
\newcommand{\beq}{\begin{equation}}
\newcommand{\eeq}{\end{equation}}

\newtheorem{theorem}{Theorem}[section]
\newtheorem*{theorem*}{Theorem}
\newtheorem{lemma}[theorem]{Lemma}

\newtheorem{proposition}[theorem]{Proposition}

\theoremstyle{definition}
\newtheorem{remark}[theorem]{Remark}

\title[Choquard critical system]{Segregated solutions for a critical Choquard system with a small interspecies repulsive force}

\author{Sabrina Caputo}
\address[S. Caputo]{Dipartimento di Matematica, Universit\`a degli studi di Bari Aldo Moro, via Edoardo Orabona 4,70125 Bari, Italy}
\email{s.caputo28@phd.uniba.it}

\date\today
\subjclass[2010]{35B44, 53C21, 58J32}
\keywords{Choquard equation, coupled elliptic systems, blow-up solutions.\\
Work supported by "INDAM-GNAMPA Project", CUP E53C25002010001.}

\begin{document}
\begin{abstract}
    We consider the following elliptic system 
    \begin{equation*}
        -\Delta u_i=(|x|^{-\mu}*|u_i|^{4-\frac{\mu}{2}})|u_i|^{2-\frac{\mu}{2}}u_i+\sum_{j=1\,  j\not =i}^2\beta u_iu_j^2\quad \hbox{in} \, \, \RQ,\quad i=1,2.
    \end{equation*}
    If $\beta<0, |\beta|$ is small enough we build solutions such that the first component looks like the radial positive solution of the single equation, while the second one blows-up at the $k$ vertices of a regular polygon.
\end{abstract}
\maketitle
\section{Introduction}
In this paper we will focus on a class of coupled elliptic systems with nonlocal nonlinearities of Choquard type. Specifically, we will study the following system in $\RQ$
\beq\label{Pe}
\left\{\begin{aligned} &-\Delta u =(|x|^{-\mu}*|u|^{4-\frac{\mu}{2}})|u|^{2-\frac{\mu}{2}}u+\beta uv^2\quad &\hbox{in}\,\, \RQ\\
&-\Delta v =(|x|^{-\mu}*|v|^{4-\frac{\mu}{2}})|v|^{2-\frac{\mu}{2}}v+\beta u^2v \quad &\hbox{in}\,\, \RQ
\end{aligned}\right.\eeq
where $*$ stands for the standard convolution, $\beta<0,$ and $0<\mu\leq 4.$\\
A more general version of \eqref{Pe} is the following critical system in $\RN$
\beq\label{PO}
\left\{\begin{aligned} &-\Delta u =(|x|^{-\mu}*|u|^{2^*_\mu})|u|^{2^*_\mu-2}u+\beta u^{\frac{2^*}{2}-1}v^{\frac{2^*}{2}}\quad &\hbox{in}\,\, \RN\\
&-\Delta v =(|x|^{-\mu}*|v|^{2^*_\mu})|v|^{2^*_\mu-2}v+\beta u^{\frac{2^*}{2}}v^{\frac{2^*}{2}-1} \quad &\hbox{in}\,\, \RN
\end{aligned}\right.\eeq
where $2^*:=\frac{2N}{N-2}$ is the critical Sobolev exponent and $2^*_\mu:=\frac{2N-\mu}{N-2}$ is the upper critical exponent in the sense of the Hardy-Littlewood-Sobolev inequality \cite{LL}.\\

The single equation of \eqref{PO}, often referred to as the \textbf{nonlinear Choquard} or \textbf{Schr\"odinger-Newton equation}, has deep roots in mathematical physics. Indeed, it seems to originate from the model of the polaron of H. Fr\"ohlich and S. Pekar, where free electrons in an ionic lattice interact with phonons associated with the deformation of the lattice or with the polarization that it creates in the medium (interaction of an electron with its own hole)\cite{F},\cite{FF},\cite{P}. It was later utilized by Choquard in 1976 in the modeling of a one-component plasma. More recently, this kind of equation has gained renewed interest as the non-relativistic limit of the Einstein-Klein-Gordon system, often appearing in the study of boson stars and scalar field dark matter \cite{RB}.\\

The mathematical framework for such nonlocal problems has been extensively developed in \cite{MV}, who established the existence results and qualitative properties for ground state solutions. In particular, the critical regime presents formidable mathematical challenges. It sits at the threshold of the Hardy-Littlewood-Sobolev inequality (see \cite{LL}).
\begin{proposition}
    Let $t, r >1$ and $0<\mu<N$ with $\frac{1}{t}+\frac{\mu}{N}+\frac{1}{r}=2.$ There exists a sharp constant $C=C(t,N,\mu,r)$ such that for any $f\in L^t(\RN)$ and $h\in L^r(\RN)$ 
    $$\int_{\RN}\int_{\RN}\frac{|f(x)||h(y)|}{|x-y|^\mu}\, dx\, dy  \leq C\|f\|_t\|h\|_r.$$  
\end{proposition}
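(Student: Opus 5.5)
The plan is to prove the inequality with some finite constant $C$. Existence of a sharp (best) constant then follows by defining $C$ as the supremum of the left-hand side over $\|f\|_t=\|h\|_r=1$. Finiteness of this supremum is exactly the inequality. Since only $|f|,|h|$ enter, we may assume $f,h\ge 0$. The approach is the classical one: reduce to a weighted Young/weak-type estimate for the Riesz potential $I(h)(x)=\int h(y)|x-y|^{-\mu}\,dy$. By Hölder, the left side is $\int f\, I(h)\le \|f\|_t\|I(h)\|_{t'}$. So it suffices to show $I:L^r\to L^{q}$ is bounded with $q=t'$. The relation $\frac1t+\frac{\mu}{N}+\frac1r=2$ is exactly $\frac1q=\frac1r-\frac{N-\mu}{N}$, i.e.\ the Hardy--Littlewood--Sobolev exponent relation. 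Note $1<r<q<\infty$ since $t,r>1$ and $0<\mu<N$.

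First step: a weak-type bound. Split the kernel as $K=K\chi_{\{|z|<R\}}+K\chi_{\{|z|\ge R\}}=K_1+K_2$. Then $K_1\in L^1$ with $\|K_1\|_1\sim R^{N-\mu}$, and $K_2\in L^{r'}$ with $\|K_2\|_{r'}\sim R^{N/r'-\mu}$. The exponent is negative precisely because $\mu r'>N$, which follows from $q<\infty$. Normalize $\|h\|_r=1$. Young and Hölder give $\|K_2*h\|_\infty\le cR^{N/r'-\mu}$. Given $\lambda>0$, choose $R$ so that this equals $\lambda/2$. Chebyshev applied to $K_1*h$ then gives
$$|\{I(h)>\lambda\}|\le |\{K_1*h>\lambda/2\}|\le \left(\frac{2\|K_1\|_1}{\lambda}\right)^r \lesssim \lambda^{-q},$$
after substituting $R\sim\lambda^{-1/(\mu-N/r')}$ and checking the exponents match via the HLS relation. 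Thus $I$ is of weak type $(r,q)$ for every admissible pair.

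Second step: upgrade to strong type by Marcinkiewicz interpolation. Fix $r$ and pick $r_0<r<r_1$ in $(1,N/(N-\mu))$ with corresponding $q_0,q_1$. The map $1/r\mapsto 1/q$ is affine with slope $1$, so the interpolated exponent is exactly $q$. Marcinkiewicz (applicable since $r_i\le q_i$) yields $\|I(h)\|_q\le C\|h\|_r$, and combining with the Hölder step finishes the proof.

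The main obstacle is the exponent bookkeeping in the weak-type step: choosing $R(\lambda)$ and verifying both that $K_2\in L^{r'}$ and that the final power of $\lambda$ is exactly $-q$. Everything else is standard. An alternative I would keep in reserve is Lieb's rearrangement approach: reduce to symmetric decreasing $f,h$ via Riesz's rearrangement inequality, then use layer-cake decomposition to reduce to characteristic functions of balls. That route also yields existence of optimizers, but only the finite constant is needed here.
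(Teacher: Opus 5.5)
Your proof is correct. The duality step reduces the claim to boundedness of $h\mapsto |x|^{-\mu}*h$ from $L^r$ to $L^{t'}$ with $\frac{1}{t'}=\frac1r-\frac{N-\mu}{N}$. Splitting the kernel gives weak type $(r,t')$; the exponents match because $\mu-\frac{N}{r'}=\frac{N}{t'}$. Off-diagonal Marcinkiewicz with $r_0<r<r_1$ in $(1,\frac{N}{N-\mu})$ upgrades this to strong type. Taking the sharp constant to be the supremum over $\|f\|_t=\|h\|_r=1$ is exactly what the statement asserts.

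The paper gives no proof of its own; it quotes the result from \cite{LL}, where the argument is different and more elementary. There one writes $f$, $h$ and the kernel $|x|^{-\mu}=\mu\int_0^\infty c^{-\mu-1}\chi_{\{|x|<c\}}\,dc$ in layer-cake form. The resulting integral of three characteristic functions is bounded by the trivial estimate $\min\{u(a)v(b),\,u(a)w(c),\,v(b)w(c)\}$, where $u(a)=|\{f>a\}|$, $v(b)=|\{h>b\}|$ and $w(c)=|B_c|$. One then integrates explicitly in $(a,b,c)$, using $\|f\|_t^t=t\int_0^\infty a^{t-1}u(a)\,da$ and the analogous identity for $h$.

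That route treats $f$ and $h$ symmetrically, needs no interpolation theorem, and gives an explicit upper bound for the constant. It also needs no rearrangement, so the Riesz-rearrangement step in your reserve plan is superfluous for a finite constant. Rearrangement only matters for the exact sharp value in the diagonal case $t=r=\frac{2N}{2N-\mu}$ and for the existence of optimizers.

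Your route relies on standard harmonic-analysis machinery (Young's inequality and Stein--Weiss/Marcinkiewicz) and gives a less transparent constant. In exchange it is modular and isolates the mapping property of the Riesz potential. Moreover, your truncation argument alone also covers the endpoint $r=1$, yielding $L^1\to L^{N/\mu,\infty}$, which the bilinear formulation cannot express.
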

Notice that, by Hardy-Littlewood-Sobolev inequality, the integral 
$$\int_{\RN}\int_{\RN}\frac{|u(x)|^q|u(y)|^q}{|x-y|^\mu}\, dx\, dy $$ is well-defined if $|u|^q\in L^t(\RN)$ for some $t>1$ satisfying $\frac{2}{t}+\frac{\mu}{N}=2.$ Thus, for $u\in H^1(\RN),$ by Sobolev embedding theorems, we know that
$$2\leq tq\leq 2^*=\frac{2N}{N-2}$$ namely $$\frac{2N-\mu}{N}\leq q\leq \frac{2N-\mu}{N-2}.$$ Thus, $\frac{2N-\mu}{N}$ is called the lower critical exponent, while $2^*_\mu:=\frac{2N-\mu}{N-2}$ is the upper critical exponent in the sense of the Hardy-Littlewood-Sobolev inequality.\\

For the upper critical case, by using the moving plane methods in integral developed in \cite{CL,CLO}, Lei \cite{LEI}, Du and Yang \cite{DY}, Guo et al. \cite{GH} classified independently the positive solutions of the critical Hartree equation
\begin{equation}\label{plim}
    -\Delta u=(|x|^{-\mu}*|u|^{2^*_\mu})|u|^{2^*_\mu-2}u, \quad \hbox{in}\, \, \RN.
\end{equation}
and proved that every positive solution of \eqref{plim} must assume the form
\begin{equation}\label{Uj}
     \Ud(x)=\lambda^{\frac{N-2}{2}}\mathcal{U}(\lambda(x-\xi))=\alpha_{N,\mu}\frac{\lambda^{\frac{N-2}{2}}}{\left(1+\lambda^2|x-\xi|^2\right)^{\frac{N-2}{2}}},\quad \lambda>0,\,\, x, \xi\in\mathbb R^N
 \end{equation}
 where
\begin{equation}\label{U}
    \mathcal{U}(x):=c_N\frac{1}{(1+|x|^2)^{\frac{N-2}{2}}},\quad c_N>0
\end{equation}
  and  $\alpha_{N,\mu}$ is a constant explicitly given and depends only on $N$ and $\mu$.\\

A central theme of this work is the role of the coupling parameter $\beta.$ When $\beta<0,$ the system \eqref{Pe} is in a competitive regime. In such cases, the interaction term acts as a repulsive force between the two components $u,v.$ \\
As suggested by the preliminary results (see \cite{CMP}, \cite{DMPP},\cite{DMPPP}) for $\beta<0$ and $|\beta|$ small, we will use the function $\mathcal{U}$ as approximation for the first component and for the second we will choose 
$$V:=\sum_{j=1}^k\Uj$$ where $k\geq 2$ is fixed, $\lambda>0$ and the points $\xi_j$  are arranged at the vertices of a regular polygon.\\
More precisely, 
\begin{equation}\label{xi}
\xi_j:=\rho\bigg(\cos\frac{2\pi(j-1)}{k},\sin\frac{2\pi(j-1)}{k},0,0\bigg),\quad \frac{1}{\lambda^2}+\rho^2=1,\quad j=1,...,k.
\end{equation}
The transition from the Choquard equation \eqref{plim} to the system \eqref{PO} is not trivial. While the Choquard equation and coupled Schr\"odinger systems with competitive interactions have been extensively investigated in isolation, their interaction remains an untouched frontier in the current literature. Our work is, to our knowledge, the first to bridge the gap between the theory of competitive systems and the critical Choquard equation.\\
We prove that even under the influence of nonlocal term, the system presents the same solution seen in \cite{CMP}. \\
Indeed, we have the following main theorem
\begin{theorem}\label{mainT}
    For every fixed $k\geq 2,$ there exists $\beta_0<0$ such that, for each $\beta\in [\beta_0,0),$ the system \eqref{Pe} has a solution $u=(u_1,u_2)$ with the form
    $$ u_1:=\mathcal{U}+\phi_1, \quad u_2:=\sum_{j=1}^k\Uj+\phi_2\quad \hbox{in}\, \, \RQ,$$ where $\mathcal{U}, \Uj$ defined in \eqref{U} and \eqref{Uj}, $\xi_j$ in \eqref{xi} and $\phi_1,\phi_2\in D^{1,2}(\RQ).$ \\
    Furthermore, 
    $$\|\phi_1\|_*, \|\phi_2\|_*\rightarrow0,\quad \frac{1}{\lambda}=O\bigg(e^{-c\frac{1}{|\beta|}}\bigg)$$ for some constant $c>0,$ as $|\beta|\rightarrow0.$
\end{theorem}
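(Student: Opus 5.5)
We follow the Lyapunov--Schmidt scheme of \cite{CMP}, adapted to the nonlocal term. The first step is to choose a symmetric functional setting. We work in the space $\mathcal H$ of pairs $(\phi_1,\phi_2)\in D^{1,2}(\RQ)^2$ with the following invariances: both components are even in $x_3,x_4$ and even in $x_2$; $\phi_1$ is radial, or at least $O(4)$-invariant in the allowed directions; $\phi_2$ is invariant under rotation by $2\pi/k$ in the $(x_1,x_2)$-plane. We also impose the Kelvin-type invariance $u(x)\mapsto |x|^{-2}u(x/|x|^2)$. This inversion is compatible with the constraint $\frac1{\lambda^2}+\rho^2=1$, because each $\xi_j$ then sits on the sphere where the Kelvin transform fixes $\Uj$. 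It also respects the critical Choquard nonlinearity, since \eqref{plim} is conformally invariant.

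With these symmetries, the kernel of the linearization of \eqref{plim} at $\mathcal U$ is trivial, by the nondegeneracy of the Hartree bubble from \cite{DY,GH}. At the bubbles $\Uj$ only one direction survives, namely the combination $\sum_j Z^0_{\lambda,\xi_j}$ with $Z^0=\partial_\lambda \Uj$ taken along the curve $\rho=\rho(\lambda)$. So there is a single parameter $\lambda$, and a one-dimensional reduced problem.

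The next step is the linear theory. We introduce weighted norms $\|\cdot\|_*$ and $\|\cdot\|_{**}$, in the style of \cite{CMP}, of the form $\sup (1+|x|)^{\dots}|\phi|$ plus a bubble-centred weight $\sum_j \lambda(1+\lambda|x-\xi_j|)^{-\dots}$. We then prove the a priori estimate $\|\phi\|_*\le C\|h\|_{**}$ for the projected linear system
\[
L_1\phi_1=h_1,\qquad L_2\phi_2=h_2+c\sum_j (\text{weight})\Zj,\qquad \int \Zj\phi_2\,(\dots)=0 .
\]
The operators here are
\[
L_1\phi=-\Delta\phi-(|x|^{-\mu}*\mathcal U^{\QM})(\TM)\mathcal U^{\DM}\phi-(\QM)(|x|^{-\mu}*\mathcal U^{\TM}\phi)\mathcal U^{\TM},
\]
and $L_2$ is the analogous operator with $V=\sum_j\Uj$. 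The proof goes by contradiction and blow-up: rescale around each $\xi_j$, pass to the limit, and invoke nondegeneracy. Outer decay is controlled by a Green-representation argument, in which the nonlocal term is estimated with the Hardy--Littlewood--Sobolev inequality of the Proposition together with explicit convolution estimates of $|x|^{-\mu}*\Uj^{\QM}$; the latter equals a multiple of $\Uj^{\frac{4-\mu}{2}\cdot\frac{2}{2}}$ by the Hartree identity. The coupling term $\beta u_iu_j^2$ is put into the right-hand side and treated as a perturbation, since $|\beta|$ is small.

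We then turn to the error and the contraction. The error $E_1=\beta\mathcal U V^2$ is of size $O(|\beta|)$ in $\|\cdot\|_{**}$. The error $E_2$ consists of $\beta V\mathcal U^2$ plus the Choquard interaction between different bubbles,
\[
(|x|^{-\mu}*V^{\QM})V^{\DM}V-\sum_j(|x|^{-\mu}*\Uj^{\QM})\Uj^{\TM}.
\]
This interaction is $O(\lambda^{-2}\,\text{polylog})$, because the mutual distance of the bubbles is of order $1$ while they concentrate with rate $\lambda\to\infty$. A fixed-point argument then gives $\phi=\phi(\lambda)$ with $\|\phi\|_*\le C(|\beta|+\lambda^{-2}\log\lambda)\to0$; this uses local Lipschitz bounds on the Choquard nonlinearity, which are valid since $\DM\ge0$ for $\mu\le4$.

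The final step, and the main obstacle, is the reduced equation $c(\lambda)=0$. Testing with $\sum_j\Zj$, equivalently differentiating the reduced energy $F(\lambda)=J(\mathcal U+\phi_1,V+\phi_2)$, we expect an expansion
\[
\partial_\lambda F = \frac{1}{\lambda^3}\Big(-A\,k\,\gamma(k) + B|\beta|\,\lambda^{2}\cdot \frac{\log\lambda}{\lambda^2}\cdot(\dots)\Big)+\text{h.o.t.}
\]
The difficulty is that in dimension $4$ the term $\int\beta\,\mathcal U^2\Uj\Zj$ carries a logarithm: $\mathcal U^2\Uj$ integrated against $Z^0$ gives $\beta\lambda^{-3}\log\lambda$-type contributions. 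The bubble--bubble Choquard interaction, by contrast, gives $-\lambda^{-3}$ times a positive constant. Balancing $|\beta|\log\lambda\sim 1$ yields $\lambda\sim e^{c/|\beta|}$, which is the rate claimed in Theorem \ref{mainT}. Concretely, we will compute the two leading terms exactly, using the Hartree identity to reduce the nonlocal interaction to a local integral of the form $\int \Uj^{3}\mathcal U_{\lambda,\xi_i}$ up to constants. We will then show that the remainders, including those involving $\phi$, are $o(\lambda^{-3})$. Since $\|\phi\|_*$ is only $O(|\beta|+\lambda^{-2}\log\lambda)$, this requires sharper estimates, such as the $\lambda$-derivative of $\phi$ or orthogonality cancellations. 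Once the expansion is established, a sign change of $\partial_\lambda F$ on an interval $[e^{c_1/|\beta|},e^{c_2/|\beta|}]$ gives a zero by continuity, which concludes the proof.
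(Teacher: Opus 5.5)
Your route is the paper's. You use the space $X$ (evenness in $x_2,x_3,x_4$, $\frac{2\pi}{k}$-rotation invariance, and Kelvin invariance, which is compatible with the ansatz exactly when $\frac{1}{\lambda^2}+\rho^2=1$), so only $\lambda$ remains. Invertibility of $\mathcal L$ comes from a contradiction argument plus nondegeneracy (the paper takes the latter for $0<\mu\le4$ from \cite{LI}). A contraction follows with the $\beta$-linear terms moved into $\mathcal N$. Finally, testing with $Z=\sum_j Z^0_{\lambda,\xi_j}$ gives a one-dimensional equation in which $-\mathfrak a\lambda^{-2}$ is balanced against $\mathfrak b|\beta|\lambda^{-2}\ln\lambda$. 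For the remainders you leave open, the paper needs no $\lambda$-derivative of $\phi$. It bounds every $\phi$-dependent term directly by $O(|\beta|\lambda^{-2})$, which is negligible because $|\beta|\ln\lambda\sim 1$ at the zero.

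The step that fails is your claim that the inter-bubble Choquard interaction is $O(\lambda^{-2}\,\mathrm{polylog})$ and reduces, via the Hartree identity, to integrals like $\int\Ui^3\Uj$. That captures only the local-type part, $\int g(\Ui)\Uj\sim\lambda^{-2}|\xi_i-\xi_j|^{-2}$. There is also a genuinely nonlocal part, $\iint\Ui^{\QM}(x)\Uj^{\QM}(y)|x-y|^{-\mu}\,dx\,dy$. Since $\int_{\RQ}\Ui^{\QM}\,dx=C\lambda^{-\frac{\mu}{2}}$, this part is of order $\lambda^{-\mu}|\xi_i-\xi_j|^{-\mu}$.

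In the tested equation it appears as $\int_{\RQ}(|x|^{-\mu}*\Uj^{\QM})\,\Ui^{\TM}Z^0_{\lambda,\xi_i}\,dx$. Near $\xi_i$, Remark \ref{REM} gives $|x|^{-\mu}*\Uj^{\QM}\approx\sigma_{4,\mu}\lambda^{-\frac{\mu}{2}}|\xi_i-\xi_j|^{-\mu}$. Meanwhile $\int\Ui^{\TM}Z^0_{\lambda,\xi_i}\,dx=\frac{\lambda}{\alpha_{4,\mu}(\QM)}\partial_\lambda\int\Ui^{\QM}\,dx$ is a negative multiple of $\lambda^{-\frac{\mu}{2}}$. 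The net contribution is $-c\,\lambda^{-\mu}\sum_{j\ne i}|\xi_i-\xi_j|^{-\mu}$ with $c>0$, the same sign as $-\mathfrak a\lambda^{-2}$.

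For $\mu<2$ this term dominates both $\lambda^{-2}$ and $|\beta|\lambda^{-2}\ln\lambda$. So the reduced function keeps a fixed sign, and there is no zero with $\ln\lambda\sim 1/|\beta|$. For the same reason, the interaction part of $\mathcal E_2$ is only $O(\lambda^{-\min\{\mu,2\}})$ in $\|\cdot\|_{**}$. For $\mu=2$ the term only enlarges $\mathfrak a$, and for $\mu>2$ it is lower order. As written, your argument therefore covers only $\mu\ge 2$.

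The paper's proof has the same blind spot. In Proposition \ref{errore} it bounds this term by choosing $\delta=1$ under the constraint $0<\delta<\frac{\mu}{2}$, which tacitly assumes $\mu>2$. In Proposition \ref{ridotto} it inserts the expansion of $V^{\QM}$, valid only in $\Omega_1$, inside the convolution, which drops $|x|^{-\mu}*\sum_{j\ge2}\Uj^{\QM}$.

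Two smaller points:
\begin{itemize}
\item $\phi_1$ cannot be radial. Both $\mathcal E_1=\beta\,\mathcal U V^2$ and the coupling terms in $\mathcal N_1$ have only the polygonal symmetry, so use $X$ for both components, as the paper does.
\item The Hartree identity makes $|x|^{-\mu}*\Uj^{\QM}$ a multiple of $\Uj^{\frac{\mu}{2}}$, not of $\Uj^{\frac{4-\mu}{2}}$. This is what turns $g(\Uj)$ into a multiple of $\Uj^{3}$.
\end{itemize}
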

In this article, we employ a Lyapunov-Schmidt reduction method to prove the main result. For this reason, we only consider the case $N=4$, since, if $N\geq 5$ the coupling term $u_i^{\frac{2^*}{2}-1}u_j^{\frac{2^*}{2}}$ has sub-linear growth and the reduction process does not work. 
\section{The finite dimensional reduction}
We carry out the finite-dimensional reduction argument in a weighted space in order to obtain a good estimate for the error term. Let
\begin{equation}\label{normastar}
\|u\|_*=\sup_{x\in\RQ}\left(\sum_{j=1}^k\frac{1}{(1+\lambda|x-\xi_j|)^{1+\tau}}\right)^{-1}\lambda^{-1}|u(x)|\end{equation}
and
\begin{equation}\label{normastarstar}
\|h\|_{**}=\sup_{x\in\RQ}\left(\sum_{j=1}^k\frac{1}{(1+\lambda|x-\xi_j|)^{3+\tau}}\right)^{-1}\lambda^{-3}|h(x)|\end{equation} where $\tau<1$ small enough.\\
We set $$g(u):=\left(\int_\RQ \frac{|u(y)|^{\QM}}{|x-y|^\mu}\, dy\right)|u|^{\DM}u.$$ Moreover, we also set
$$g'(u)v=\bigg(\QM\bigg)\left(\int_\RN \frac{|u(y)|^{\DM}u(y)v(y)}{|x-y|^\mu}\, dy\right)|u|^{\DM}u+\bigg(\TM\bigg)\left(\int_\RN \frac{|u(y)|^{\QM}}{|x-y|^\mu}\, dy\right)|u|^{\DM}v.$$
Then we can rewrite the problem \eqref{Pe} in the following way 
\beq\label{Per}
\left\{\begin{aligned} &-\Delta u =g(u)+\beta uv^2\quad &\hbox{in}\,\, \RQ\\
&-\Delta v =g(v)+\beta u^2v \quad &\hbox{in}\,\, \RQ
\end{aligned}\right.\eeq
We look for a solution of the problem \eqref{Per} of the form
\begin{equation}\label{sol}
    u=\mathcal{U}+\phi_1,\quad v=\sum_{j=1}^k\Uj+\phi_2=:V+\phi_2
\end{equation}

where $k\geq 2$ is fixed, $\beta<0$, $\phi_1,\phi_2\in \mathcal{D}^{1,2}(\RQ)$ are small functions to be found, and $$\mathcal{U}(x):=c_4\frac{1}{(1+|x|^2)},\quad \Uj(x):=\alpha_{4,\mu}\frac{\lambda}{(1+\lambda^2|x-\xi_j|^2)}\quad  j=1,...,k$$ with 
$$\xi_j:=\rho\bigg(\cos\frac{2\pi(j-1)}{k},\sin\frac{2\pi(j-1)}{k},0,0\bigg),\quad \frac{1}{\lambda^2}+\rho^2=1,\quad j=1,...,k.$$
\subsection{Setting of the problem}
\begin{remark}
    We can observe that our problem \eqref{Pe} is invariant under Kelvin transform. Indeed, denoted by $$u_k(x):= \frac{1}{|x|^2}u(y)$$ and $$v_k(x):=\frac{1}{|x|^2}v(y)$$the Kelvin transform of $u$ and $v$ respectively with $y:=\frac{x}{|x|^2},$ it results that 
    $$-\Delta u_k(x)=\frac{1}{|x|^6}(-\Delta u)(y)=\frac{1}{|x|^6}\bigg[(|y|^{-\mu}*u^{\QM})u^{\TM}(y)+\beta u(y) v^2(y)\bigg].$$ Since,  $$u(y)=|x|^2u_k(x),\quad v^2(y)=|x|^4v_k^2(x),\quad  u(y)^{\TM}=|x|^{6-\mu}u_k(x)^{\TM}$$ and 
    \begin{equation*}
        \begin{aligned}
            (|y|^{\mu}*u^{\QM})&=\int_\RQ\frac{1}{|y-z|^\mu}u^{\QM}(z)\, dz=\int_\RQ\frac{|x|^\mu |w|^\mu}{|x-w|^\mu}|w|^{8-\mu}u_k^{\QM}(w)\frac{1}{|w|^8}\, dw\\
            &=|x|^{\mu}(|x|^{-\mu}*u_k^{\QM})
        \end{aligned}
    \end{equation*}
    using the following geometric identity $$\bigg|\frac{x}{|x|^2}-\frac{w}{|w|^2}\bigg|=\frac{|x-w|}{|x||w|}.$$ \\
    Therefore, 
    \begin{equation*}
        \begin{aligned}
            -\Delta u_k(x)&=\frac{1}{|x|^6}\bigg[|x|^\mu(|x|^{-\mu}*u_k^{\QM})|x|^{6-\mu}u_k^{\TM}(x)+\beta |x|^2u_k(x)|x|^4v_k^2(x)\bigg]\\
            &=(|x|^{-\mu}*u_k^{\QM})u_k^{\TM}(x)+\beta u_k(x)v_k^2(x).
        \end{aligned}
    \end{equation*}
    Similarly,
    \begin{equation*}
        -\Delta v_k(x)=(|x|^{-\mu}*v_k^{\QM})v_k^{\TM}(x)+\beta u_k^2(x)v_k(x).
    \end{equation*}
\end{remark}
Given a function $\phi\in \mathcal{D}^{1,2}(\RQ), $ let us consider the following invariances:
\begin{itemize}
    \item[(\textbf{A})] Evenness with respect to the $x_2,x_3,x_4$ variables, i.e.,
    $$\phi(x_1,x_2,x_3,x_4)=\phi(x_1,-x_2,x_3,x_4)=\phi(x_1,x_2,-x_3,x_4)=\phi(x_1,x_2,x_3,-x_4);$$
    \item[(\textbf{B})] Invariance under rotation of $\frac{2\pi}{k}$ in the $x_1,x_2$- variables, i.e.,
    $$\phi(\Theta_k(x_1,x_2),x_3,x_4)=\phi(x_1,x_2,x_3,x_4), $$ where $\Theta_k$ is the rotation matrix;
    \item[(\textbf{C})] Invariance under Kelvin transform, i.e., 
    $$\phi(x)=\frac{1}{|x|^2}\phi\bigg(\frac{x}{|x|^2}\bigg).$$
\end{itemize}
We define the associated space 
$$X:=\{\phi\in\mathcal{D}^{1,2}(\RQ):\phi \, \hbox{satisfies}\, (\textbf{A}), (\textbf{B}), (\textbf{C})\} $$ and we look for solutions $(u,v)$ in the space $X\times X.$\\
In order to perform the Lyapunov-Schmidt reduction it is very important to understand the kernel of the linearized operator associated to the limit problem \eqref{plim}. Very recently, in \cite{LI}, it has been shown that the solutions of \eqref{plim} are non-degenerate provide $0<\mu\leq 4,$ namely if we look for bounded solution of the linearized equation
\begin{equation}\label{lineq}
    -\Delta \mathcal{Z}=g'(\mathcal{U})\mathcal{Z}
\end{equation}
then $\mathcal{Z}\in \hbox{span}<Z^0,Z^1,...,Z^4>$ where 
$$Z^0(x):=\frac{1-|x|^2}{(1+|x|^2)^2} \quad \hbox{and}\quad Z^i(x):=\frac{x_i}{(1+|x|^2)^2},\, i=1,...,4.$$
Given $\lambda>0$ and $\xi_j$ given in \eqref{xi}, we can define 
$$Z^i_{\lambda,\xi_j}(x):=\lambda Z^i(\lambda(x-\xi_j)), \quad i=0,1,...,4,\quad  j=1,...,k$$ and 
\begin{equation}\label{Z}
    Z(x):=\sum_{j=1}^kZ^0_{\lambda,\xi_j}(x).
\end{equation}
We introduce the spaces 
$$K:=X\cap\hbox{span}\{Z\}, \quad K^\perp:=\{\phi\in X:<\phi,Z>=0\}$$
and the orthogonal projections
$$\Pi: X\times X\rightarrow X\times K \quad \hbox{and} \quad \Pi^\perp:X\times X\rightarrow X\times K^\perp.$$
If $(u,v)$ are given by \eqref{sol}, we can rewrite the system \eqref{Per} in terms of $(\phi_1,\phi_2)$ as 
\begin{equation}\label{pi}
    \Pi\{\mathcal{L}(\phi_1,\phi_2)-\mathcal{E}-\mathcal{N}(\phi_1,\phi_2)\}=0
\end{equation}
\begin{equation}\label{pio}
    \Pi^\perp\{\mathcal{L}(\phi_1,\phi_2)-\mathcal{E}-\mathcal{N}(\phi_1,\phi_2)\}=0
\end{equation}

where the linear operator $\mathcal{L}=(\mathcal{L}_1,\mathcal{L}_2)$ is defined by 
\beq\label{L}
\left\{\begin{aligned} &\mathcal{L}_1(\phi_1):=-\Delta\phi_1-g'(\mathcal{U})\phi_1\\
&\mathcal{L}_2(\phi_2):=-\Delta\phi_2-g'(V)\phi_2
\end{aligned}\right.\eeq
the error term $\mathcal{E}=(\mathcal{E}_1,\mathcal{E}_2)$ by 
\beq\label{E}
\left\{\begin{aligned} &\mathcal{E}_1:=\beta \mathcal{U}V^2\\
&\mathcal{E}_2:=g(V)-\sum_{j=1}^kg(\Uj)+\beta\mathcal{U}^2V
\end{aligned}\right.\eeq
and the quadratic term $\mathcal{N}=(\mathcal{N}_1,\mathcal{N}_2)$ by 
\beq\label{N}
\left\{\begin{aligned} &\mathcal{N}_1(\phi_1,\phi_2):=g(\mathcal{U}+\phi_1)-g(\mathcal{U})-g'(\mathcal{U})\phi_1+2\beta\mathcal{U}V\phi_2+\beta \mathcal{U}\phi_2^2+\beta V^2\phi_1+2\beta V\phi_1\phi_2+\beta\phi_1\phi_2^2\\
&\mathcal{N}_2(\phi_1,\phi_2):=g(V+\phi_2)-g(V)-g'(V)\phi_2+\beta \mathcal{U}^2\phi_2+2\beta \mathcal{U}V\phi_1+2\beta\mathcal{U}\phi_1\phi_2+\beta V\phi_1^2+\beta \phi_1^2\phi_2
\end{aligned}\right.\eeq

\subsection{The invertibility of linear operator $\mathcal{L}$}
We consider the following linear problem 
\beq\label{Plin}
\left\{\begin{aligned} &\mathcal{L}_1(\phi_1)=h_1\\&\mathcal{L}_2(\phi_2)=h_2+\mathfrak{c}\sum_{j=1}^kg'(\Uj)\Zj\\
&\phi_1,\phi_2\in X\\
&\sum_{j=1}^k\int_\RQ g'(\Uj)\Zj\phi_2\, dx =0
\end{aligned}\right.\eeq
for some real number $\mathfrak{c}.$\\

Reasoning as in Lemma 3.1 of \cite{GM} one can show the following result
\begin{lemma}
    Suppose that $\phi_{1,n},\phi_{2,n}$ solve \eqref{Plin} for $h_1=h_{1,n}$ and $h_2=h_{2,n}$.\\ If $\|h_{1,n}\|_{**}\rightarrow0$ and $\|h_{2,n}\|_{**}\rightarrow0$ as $n\rightarrow\infty,$ then $\|\phi_{1,n}\|_*\rightarrow 0$ and $\|\phi_{2,n}\|_*\rightarrow 0$.
\end{lemma}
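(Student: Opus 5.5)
We argue by contradiction, in the style of Lemma 3.1 of \cite{GM} and of the analogous a priori estimates for the critical Hartree equation. Note first that the system \eqref{Plin} is decoupled: $\phi_1$ solves $\mathcal{L}_1(\phi_1)=h_1$, and $\phi_2$ solves the projected problem with the single constant $\mathfrak{c}$. We may therefore treat the two components separately. Suppose, for one of them say $\phi_2$, that $\|h_{2,n}\|_{**}\to0$ while $\|\phi_{2,n}\|_*\not\to0$. After normalizing we may assume $\|\phi_{2,n}\|_*=1$, still with $\|h_{2,n}\|_{**}\to0$. The case of $\phi_1$ is identical but simpler: there is one bubble with $\lambda=1$, $\xi=0$, and no projection term. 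For it we use directly the nondegeneracy result of \cite{LI} together with the symmetries (A) and (C) of $X$. Evenness kills $Z^1,\dots,Z^4$, and Kelvin invariance kills $Z^0$, since $Z^0$ is odd under the Kelvin transform. Hence $\mathcal{L}_1$ has trivial kernel in $X$.

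\textbf{Step 1: the constant $\mathfrak{c}_n\to0$.} We test the second equation against $Z=\sum_j Z^0_{\lambda,\xi_j}$. The symmetric bilinear structure of $-\Delta-g'(\cdot)$ gives
\[
\mathfrak{c}_n\sum_{i,j}\int g'(\Uj)Z^0_{\lambda,\xi_j}Z^0_{\lambda,\xi_i}=\int \mathcal{L}_2(\phi_{2,n})Z-\int h_{2,n}Z .
\]
The left-hand coefficient is $\ge c_0>0$. On the right, $\int \mathcal{L}_2(\phi_{2,n})Z=\int \phi_{2,n}\,(-\Delta Z-g'(V)Z)$, and $-\Delta Z^0_{\lambda,\xi_j}=g'(\Uj)Z^0_{\lambda,\xi_j}$. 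The difference $g'(V)Z-\sum_j g'(\Uj)Z^0_{\lambda,\xi_j}$ is small in the weighted sense, being $O(\lambda^{-\sigma})$ times $\lambda^3\sum_j(1+\lambda|x-\xi_j|)^{-3-\tau}$. This uses $|\xi_i-\xi_j|\ge c\rho$ with $\rho\to1$, so the bubbles are well separated at scale $\lambda$. Hence this term is $o(1)\|\phi_{2,n}\|_*$, and $|\int h_{2,n}Z|\le C\|h_{2,n}\|_{**}$. Therefore $\mathfrak{c}_n\to0$.

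\textbf{Step 2: integral representation and the weighted bound.} We write
\[
\phi_{2,n}(x)=\int \frac{C}{|x-y|^2}\Big(g'(V)\phi_{2,n}+h_{2,n}+\mathfrak{c}_n\sum_j g'(\Uj)\Zj\Big)(y)\,dy .
\]
The key technical input is a set of convolution estimates. First, $\int |x-y|^{-2}\lambda^3(1+\lambda|y-\xi_j|)^{-3-\tau}dy\le C\lambda(1+\lambda|x-\xi_j|)^{-1-\tau}$. Second, the linear nonlocal term satisfies, for some $\theta>0$,
\[
|g'(V)\phi|\le C\|\phi\|_*\lambda^3\sum_j(1+\lambda|x-\xi_j|)^{-3-\tau-\theta} .
\]
To get this, we handle the Riesz potential $|x|^{-\mu}*(V^{3-\mu/2}\phi)$ via the explicit decay of $|x|^{-\mu}*\Uj^{4-\mu/2}\sim\lambda^{\mu/2}(1+\lambda|x-\xi_j|)^{-\mu}$ and HLS-type pointwise bounds. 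We expect this step to be the main obstacle, because of the nonlocal term, whose decay must be tracked carefully in the region between bubbles. Combining these bounds yields
\[
\|\phi_{2,n}\|_*\le o(1)+C\sup\Big(\sum_j(1+\lambda|x-\xi_j|)^{-1-\tau-\theta}\Big)\Big/\Big(\sum_j(1+\lambda|x-\xi_j|)^{-1-\tau}\Big).
\]
Since $\|\phi_{2,n}\|_*=1$, the supremum in the definition of the norm must be attained, up to a constant, in some ball $B_{R/\lambda}(\xi_j)$. By symmetry (B) we may take $j=1$.

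\textbf{Step 3: blow-up limit.} Set $\tilde\phi_n(y)=\lambda^{-1}\phi_{2,n}(y/\lambda+\xi_1)$. Then $\tilde\phi_n$ is bounded, with a uniform bound by $C(1+|y|)^{-1-\tau}$, and it satisfies $-\Delta\tilde\phi_n-g'(\mathcal{U})\tilde\phi_n=o(1)$ locally. The other bubbles disappear in the limit since $\lambda|\xi_i-\xi_1|\to\infty$. By elliptic estimates, $\tilde\phi_n\to\tilde\phi$ in $C^1_{loc}$, and $\tilde\phi$ is a bounded solution of \eqref{lineq}. By \cite{LI}, $\tilde\phi\in\hbox{span}\{Z^0,\dots,Z^4\}$. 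Symmetry (A) removes $Z^3$ and $Z^4$, and also $Z^2$, because $\xi_1$ lies on the $x_1$-axis. The Kelvin invariance (C), which maps the bubble at $\xi_1$ (with $1/\lambda^2+\rho^2=1$) to itself, relates the $Z^0$ and $Z^1$ components; we expect it to force $\tilde\phi\in\hbox{span}\{Z^0\}$, or to eliminate $Z^1$. The orthogonality condition $\sum_j\int g'(\Uj)\Zj\phi_{2,n}=0$ passes to the limit and gives $\int g'(\mathcal{U})Z^0\tilde\phi=0$. Hence $\tilde\phi=0$. This contradicts the fact that $|\tilde\phi_n|\ge c>0$ somewhere in $B_R(0)$, and the contradiction proves the lemma.
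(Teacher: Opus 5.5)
\textbf{The gap is in the component $\phi_1$.} The norms in the statement are those of \eqref{normastar}--\eqref{normastarstar} for \emph{both} components, that is, weights centred at the $\xi_j$ at scale $1/\lambda$. They are not the norms of a single bubble with $\lambda=1$, $\xi=0$, so the $\phi_1$-problem is not an ``identical but simpler'' copy of the $\phi_2$-problem.

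Concretely, at points with $|x-\xi_j|\sim1$ both weights are of order $\lambda^{-\tau}$, while $g'(\mathcal{U})$ is of order one there. Hence $|g'(\mathcal{U})\phi_1|$ can be as large as $\lambda^{-\tau}\|\phi_1\|_*$ at such points, and the gain $(1+\lambda|x-\xi_j|)^{-\theta}$ that drives your Step 2 is absent. The supremum defining $\|\phi_{1,n}\|_*$ therefore need not localize in any $B_{R/\lambda}(\xi_j)$. A blow-up at scale $1/\lambda$ only sees the vanishing potential $\lambda^{-2}g'(\mathcal{U})$, so it cannot detect where the supremum sits.

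The argument has to be run at unit scale. Set $\psi_n:=\lambda_n^{\tau}\phi_{1,n}$. Then $|\psi_n|\lesssim\sum_j|x-\xi_j|^{-1-\tau}$, and $\mathcal{L}_1\psi_n=\lambda_n^{\tau}h_{1,n}\to0$ locally uniformly away from the limits $\xi_j^\infty$ of the $\xi_j$. A limit solves \eqref{lineq} on $\mathbb{R}^4\setminus\{\xi_1^\infty,\dots,\xi_k^\infty\}$ with singularities weaker than $|x|^{-2}$, which are removable. So it is a bounded solution with symmetries (A)--(C), and it vanishes by the trivial-kernel fact you state.

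To finish, use the integral representation. It gives $|\phi_{1,n}|\le\big(C\delta^{1+\tau}+o(1)\big)\lambda\sum_j(1+\lambda|x-\xi_j|)^{-1-\tau}$ on $\bigcup_jB_\delta(\xi_j)$, and an analogous bound for large $|x|$. This forces the supremum into a compact set at unit distance from the $\xi_j$, which contradicts $\psi_n\to0$ there.

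For comparison, the paper treats $\phi_1$ through the energy identity and $\int g'(\mathcal{U})\phi_{1,n}^2\lesssim\lambda^{-2\tau}\|\phi_{1,n}\|_*^2$, without nondegeneracy. As written, this yields only $\|\phi_{1,n}\|_{\mathcal{D}^{1,2}}\to0$, which does not control $\|\phi_{1,n}\|_*$. A function of size $\lambda^{-\tau}$ at unit distance from the $\xi_j$ has $*$-norm of order one but $\mathcal{D}^{1,2}$-norm of order $\lambda^{-\tau}$. So your nondegeneracy input, used at unit scale, is genuinely needed.

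\textbf{Two smaller points.} First, in your trivial-kernel claim, (A) only removes $Z^2,Z^3,Z^4$. The function $Z^1$ is Kelvin-invariant, so (C) does not remove it either. It is removed by (B), since a rotation by $2\pi/k$ with $k\ge2$ fixes no nonzero vector of the $x_1x_2$-plane.

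Second, in Step 3 you only ``expect'' the Kelvin reduction. It is true, and you should state why. Since $|\xi_1|=\rho\to1$ and $\lambda(\rho^{-1}-\rho)=1/(\lambda\rho)\to0$, in the variable $y=\lambda(x-\xi_1)$ the inversion $x\mapsto x/|x|^2$ converges locally uniformly to $y\mapsto(-y_1,y_2,y_3,y_4)$, and $|x|^{-2}\to1$. Hence $\tilde\phi$ is even in $y_1$ and $Z^1$ drops out.

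Apart from this, your $\phi_2$ argument follows the paper's route: integral representation, control of $\mathfrak{c}$, localization, and blow-up as in Lemma 3.1 of \cite{GM}. Your Step 1, with its positive lower bound on the coefficient of $\mathfrak{c}$, is cleaner than the paper's version, which bounds that coefficient only from above.
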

\begin{proof}
    We argue by contradiction. Suppose that there exist $n\rightarrow\infty,\lambda_n$ such that $ \frac{1}{\lambda_n}\in (0,e^{-\frac{1}{\sqrt{|\beta|}}})$ and $\phi_{1,n},\phi_{2,n}$ solving \eqref{Plin} for $h_1=h_{1,n},h_2=h_{2,n}$, with $\|h_{1,n}\|_{**}\rightarrow 0, \|h_{2,n}\|_{**}\rightarrow 0$ and $\|\phi_{1,n}\|_*+\|\phi_{2,n}\|_*\geq c>0.$ \\
    We may assume that 
    \begin{equation}\label{contradiction}
        \|\phi_{1,n}\|_*+\|\phi_{2,n}\|_*=1.
    \end{equation}
    For simplicity, we let $\lambda:=\lambda_n.$\\
    \textbf{STEP 1:}\\
    Now, by the first equation, we have that 
    \begin{equation*}
        \begin{aligned}
            \|\phi_{1,n}\|^2_X&=\int_\RQ|\nabla\phi_{1,n}|^2\, dx =\int_\RQ g'(\mathcal{U})|\phi_{1,n}|^2\, dx +\int_\RQ h_{1,n}|\phi_{1,n}|\, dx 
        \end{aligned}
    \end{equation*}
    where 
    \begin{equation*}
        \begin{aligned}
            \int_\RQ h_{1,n}|\phi_{1,n}|\, dx&\lesssim \|h_{1,n}\|_{**}\|\phi_{1,n}\|_*\sum_{j=1}^k\int_\RQ\frac{\lambda^{4}}{(1+\lambda|x-\xi_j|)^{4+2\tau}}\, dx\\
            &\lesssim\|h_{1,n}\|_{**}\|\phi_{1,n}\|_*.
        \end{aligned}
    \end{equation*}
    Furthermore, chosen $R:=\frac{|x|}{2},$ we have that 
    \begin{equation*}
        \begin{aligned}
            &|x|^{-\mu}*\mathcal{U}^{\TM}\lesssim\int_{\RQ}\frac{1}{|x-y|^\mu}\frac{1}{(1+|y|^2)^{\TM}}\, dy\\
            &=\int_{B_R(x)}\frac{1}{|x-y|^\mu}\frac{1}{(1+|y|^2)^{\TM}}\, dy+\int_{\RQ\setminus B_R(x)}\frac{1}{|x-y|^\mu}\frac{1}{(1+|y|^2)^{\TM}}\, dy\\
            &\lesssim \frac{1}{(1+|x|^2)^{3-\frac{\mu}{2}}}\int_{B_R(0)}\frac{1}{|z|^\mu}\, dz+\int_{\RQ\setminus B_R(x)}\frac{1}{|y|^{6}}\, dy\\
            &\lesssim \frac{1}{(1+|x|^2)}
        \end{aligned}
    \end{equation*}
    and by Remark \ref{R}
    $$|x|^{-\mu}*\mathcal{U}\lesssim\frac{1}{(1+|x|^2)^{\frac{\mu}{2}}}$$
    it results that 
    \begin{equation*}
        \begin{aligned}
            &\int_\RQ g'(\mathcal{U})|\phi_{1,n}|^2\, dx \lesssim \int_\RQ(|x|^{-\mu}*\mathcal{U}^{\TM}|\phi_{1,n}|^2)\mathcal{U}^{\TM}\, dx +\int_\RQ(|x|^{-\mu}*\mathcal{U}^{\QM})\mathcal{U}^{\DM}|\phi_{1,n}|^2\, dx \\
            &\lesssim \int_\RQ(|x|^{-\mu}*\mathcal{U}^{\TM})
            \mathcal{U}^{\TM}|\phi_{1,n}|^2\, dx +\int_\RQ(|x|^{-\mu}*\mathcal{U}^{\QM})\mathcal{U}^{\DM}|\phi_{1,n}|^2\, dx \\
            &\lesssim \|\phi_{1,n}\|_*^2\int_\RQ\frac{1}{(1+|x|^2)^{4-\frac{\mu}{2}}}\sum_{j=1}^k\frac{\lambda^2}{(1+\lambda|x-\xi_j|)^{2+2\tau}}\, dx\\
            &+\|\phi_{1,n}\|^2_*\int_\RQ\frac{1}{(1+|x|^2)^{2}}\sum_{j=1}^k\frac{\lambda^2}{(1+\lambda|x-\xi_j|)^{2+2\tau}}\, dx \\
            &\lesssim\frac{\|\phi_{1,n}\|_*^2}{\lambda^{2\tau}}+\frac{\|\phi_{1,n}\|_*^2}{\lambda^{2\tau}}\lesssim \frac{\|\phi_{1,n}\|_*^2}{\lambda^{2\tau}}.
        \end{aligned}
    \end{equation*}
    Therefore, since $\|\phi_{1,n}\|_*\leq1$ by \eqref{contradiction} and $\|h_{1,n}\|_{**}\rightarrow0$, we have that
    $$\|\phi_{1,n}\|_X\lesssim \frac{\|\phi_{1,n}\|_*^2}{\lambda^{2\tau}} +\|h_{1,n}\|_{**}\|\phi_{1,n}\|_*\rightarrow0.$$
    \textbf{STEP 2:}\\
    By the second equation of \eqref{Plin}, we have
    \begin{equation*}
        \begin{aligned}
            |\phi_{2,n}(x)|&\lesssim\underbrace{\int_\RQ\frac{1}{|x-y|^2}(|y|^{-\mu}*V^{\TM}|\phi_{2,n}|)V^{\TM}(y)\, dy}_{(I)} +\\
            &+\underbrace{\int_\RQ\frac{1}{|x-y|^2}(|y|^{-\mu}*V^{\QM})V^{\DM}(y)|\phi_{2,n}(y)|\, dy}_{(II)} +\\
            &+|\mathfrak{c}|\bigg[\bigg|\sum_{j=1}^k\underbrace{\int_\RQ\frac{1}{|y-x|^2}(|y|^{-\mu}*\Uj^{\TM}\Zj)\Uj^{\TM}(y)\, dy}_{(III)} \bigg|+\\
            &+\bigg|\sum_{j=1}^k\underbrace{\int_\RQ \frac{1}{|y-x|^2}(|y|^{-\mu}*\Uj^{\QM})\Uj^{\DM}(y)\Zj(y)\, dy}_{(IV)} \bigg|\bigg]+\\
            &+\underbrace{\int_\RQ \frac{1}{|y-x|^2}|h_{2,n}(y)|\, dy}_{(V)}.
        \end{aligned}
    \end{equation*}
    By using Lemma \ref{LEM2} and Remark \ref{REM}, it results that
    \begin{equation}\label{b}
        \begin{aligned}
            (|y|^{-\mu}*V^{\TM}|\phi_{2,n}|)&\lesssim\int_\RQ\frac{1}{|y-x|^\mu}\sum_{j=1}^k\frac{\lambda^{\TM}}{(1+\lambda|x-\xi_j|)^{6-\mu}}|\phi_{2,n}(x)|\, dx \\
            &\lesssim\|\phi_{2,n}\|_*\int_\RQ\frac{1}{|y-x|^\mu}\sum_{j=1}^k\frac{\lambda^{\QM}}{(1+\lambda|x-\xi_j|)^{7-\mu+\tau}}\, dx\\
            &\lesssim \|\phi_{2,n}\|_*\sum_{j=1}^k\frac{\lambda^{\frac{\mu}{2}}}{(1+\lambda|y-\xi_j|)^{3+\tau-\eta}}
        \end{aligned}
    \end{equation}
    with $\eta>0$ small enough and 
    $$(|y|^{-\mu}*V^{\QM})\lesssim\sum_{j=1}^k\frac{\lambda^{\frac{\mu}{2}}}{(1+\lambda|y-\xi_j|)^\mu}$$ 
   and so
    \begin{equation*}
        \begin{aligned}
            (I)&\lesssim \|\phi_{2,n}\|_*\int_\RQ \frac{1}{|y-x|^2}\sum_{j=1}^k \frac{\lambda^{\frac{\mu}{2}}}{(1+\lambda|y-\xi_j|)^{3+\tau-\eta}}\cdot \frac{\lambda^{\TM}}{(1+\lambda|y-\xi_j|)^{6-\mu}}\, dy\\
            &\lesssim \|\phi_{2,n}\|_*\int_\RQ \frac{1}{|y-x|^2}\sum_{j=1}^k \frac{\lambda^3}{(1+\lambda|y-\xi_j|)^{9+\tau-\mu-\eta}}\, dy \\
            &\lesssim\|\phi_{2,n}\|_*\lambda\int_\RQ\sum_{j=1}^k\frac{1}{|y-\lambda(x-\xi_j)|^2}\frac{1}{(1+|y|)^{9+\tau-\mu-\eta}}\, dy \\
            &\lesssim \|\phi_{2,n}\|_*\lambda\sum_{j=1}^k\frac{1}{(1+\lambda|x-\xi_j|)^{7+\tau-\mu-\eta}}\lesssim \|\phi_{2,n}\|_*\lambda\sum_{j=1}^k\frac{1}{(1+\lambda|x-\xi_j|)^{1+\tau+\theta_1}}
        \end{aligned}
    \end{equation*}
    with $\theta_1:=6-\mu-\eta$ and 
    \begin{equation*}
        \begin{aligned}
            (II)&\lesssim\int_\RQ\frac{1}{|y-x|^2}\sum_{j=1}^k\frac{\lambda^{\frac{\mu}{2}}}{(1+\lambda|y-\xi_j|)^\mu}\frac{\lambda^{\DM}}{(1+\lambda|y-\xi_j|)^{4-\mu}}|\phi_{2,n}(y)|\, dy\\
            &\lesssim\|\phi_{2,n}\|_*\int_\RQ\frac{1}{|y-x|^2}\sum_{j=1}^k\frac{\lambda^3}{(1+\lambda|y-\xi_j|)^{5+\tau}}\, dy \\
            &\lesssim\|\phi_{2,n}\|_*\lambda\int_\RQ\frac{1}{|y-\lambda(x-\xi_j)|^2}\frac{1}{(1+|y|)^{5+\tau}}\, dy \\
            &\lesssim\|\phi_{2,n}\|_*\lambda \sum_{j=1}^k\frac{1}{(1+\lambda|x-\xi_j|)^{3+\tau}}\lesssim\|\phi_{2,n}\|_*\lambda \sum_{j=1}^k\frac{1}{(1+\lambda|x-\xi_j|)^{1+\tau+\theta_2}}
        \end{aligned}
    \end{equation*}
    with $\theta_2:=2.$
    In addition, by Lemma \ref{LEM2},
    \begin{equation}\label{a}
        \begin{aligned}
            (|y|^{-\mu}*\Uj^{\TM}\Zj)&\lesssim \int_\RQ\frac{1}{|y-x|^\mu}\sum_{j=1}^k \frac{\lambda^{\TM}}{(1+\lambda|x-\xi_j|)^{6-\mu}}\frac{\lambda}{(1+\lambda|x-\xi_j|)^2}\, dx\\
            &\lesssim\int_\RQ\frac{1}{|y-x|^\mu}\sum_{j=1}^k \frac{\lambda^{\QM}}{(1+\lambda|x-\xi_j|)^{8-\mu}}\\
            &\lesssim \sum_{j=1}^k\frac{\lambda^{\frac{\mu}{2}}}{(1+\lambda|y-\xi_j|)^{4-\eta}}
        \end{aligned}
    \end{equation}
    then, 
    \begin{equation*}
        \begin{aligned}
            (III)&\lesssim\int_\RQ\frac{1}{|y-x|^2}\sum_{j=1}^k\frac{\lambda^{\frac{\mu}{2}}}{(1+\lambda|y-\xi_j|)^{4-\eta}}\frac{\lambda^{\TM}}{(1+\lambda|y-\xi_j|)^{6-\mu}}\, dy\\
            &\lesssim\int_\RQ\frac{1}{|y-x|^2}\sum_{j=1}^k\frac{\lambda^3}{(1+\lambda|y-\xi_j|)^{10-\mu-\eta}}\\
            &\lesssim\lambda\int_\RQ\sum_{j=1}^k\frac{1}{|y-\lambda(x-\xi_j)|^2}\frac{1}{(1+|y|)^{10-\mu-\eta}}\, dy\\
            &\lesssim\sum_{j=1}^k\frac{\lambda}{(1+\lambda|x-\xi_j|)^{8-\mu-\eta}}\lesssim\sum_{j=1}^k\frac{\lambda}{(1+\lambda|x-\xi_j|)^{1+\tau}}
        \end{aligned}
    \end{equation*}
    and, by Remark \ref{REM},
    \begin{equation*}
        \begin{aligned}
            (IV)&\lesssim\int_\RQ\frac{1}{|y-x|^2}\sum_{j=1}^k\frac{\lambda^{\frac{\mu}{2}}}{(1+\lambda|y-\xi_j|)^\mu}\frac{\lambda^{\DM}}{(1+\lambda|y-\xi_j|)^{4-\mu}}\frac{\lambda}{(1+\lambda|y-\xi_j|)^2}\, dy \\
            &\lesssim \int_\RQ\frac{1}{|y-x|^2}\sum_{j=1}^k\frac{\lambda^3}{(1+\lambda|y-\xi_j|)^6}\, dy \\
            &\lesssim\lambda\int_\RQ\sum_{j=1}^k\frac{1}{|y-\lambda(x-\xi_j)|^2}\frac{1}{(1+|y|)^6}\, dy \\
            &\lesssim\sum_{j=1}^k\frac{\lambda}{(1+\lambda|x-\xi_j|)^{4}}
            \lesssim\sum_{j=1}^k\frac{\lambda}{(1+\lambda|x-\xi_j|)^{1+\tau}}.
        \end{aligned}
    \end{equation*}
    Finally, 
    \begin{equation*}
        \begin{aligned}
            (V)&\lesssim\|h_{2,n}\|_{**}\int_\RQ\frac{1}{|y-x|^2}\sum_{j=1}^k\frac{\lambda^3}{(1+\lambda|y-\xi_j|)^{3+\tau}}\, dy \\
            &\lesssim \|h_{2,n}\|_{**}\lambda\int_\RQ\sum_{j=1}^k\frac{1}{|y-\lambda(x-\xi_j)|^2}\frac{1}{(1+|y|)^{3+\tau}}\, dy \\
            &\lesssim \|h_{2,n}\|_{**}\lambda\sum_{j=1}^k\frac{1}{(1+\lambda|x-\xi_j|)^{1+\tau}}.
        \end{aligned}
    \end{equation*}
    In the following, we are going to estimate $\mathfrak{c}.$\\
    Multiplying the second equation of \eqref{Plin} by $\Zo$ and integrating, we see that $\mathfrak{c}$ satisfies:
    \begin{equation}\label{stima}
        \begin{aligned}
            &\sum_{j=1}^k<g'(\Uj)\Zj,\Zo>\mathfrak{c}=<-\Delta\phi_{2,n}-g'(V)\phi_{2,n},\Zo>-<h_{2,n},\Zo>.
        \end{aligned}
    \end{equation}
    Now, 
    \begin{equation}\label{a'}
        \begin{aligned}
            &|<h_{2,n}, \Zo>|\lesssim \int_\RQ |h_{2,n}(x)|\frac{\lambda}{(1+\lambda|x-\xi_1|)^2}\, dx \\
            &\lesssim \|h_{2,n}\|_{**}\int_\RQ \frac{\lambda}{(1+\lambda|x-\xi_1|)^2}\sum_{j=1}^k \frac{\lambda^3}{(1+\lambda|x-\xi_j|)^{3+\tau}}\, dx \\
            &\lesssim\|h_{2,n}\|_{**}\bigg(\int_\RQ\frac{\lambda^4}{(1+\lambda|x-\xi_1|)^{5+\tau}}\, dx +\sum_{j=2}^k\int_\RQ\frac{\lambda^4}{(1+\lambda|x-\xi_1|)^2}\frac{1}{(1+\lambda|x-\xi_j|)^{3+\tau}}\, dx \bigg)\\
            &\lesssim\|h_{2,n}\|_{**}\bigg(\int_\RQ\frac{1}{(1+|y|)^{5+\tau}}\, dy+\frac{1}{\lambda^\alpha}\int_\RQ \frac{1}{(1+|y|)^{5+\tau-\alpha}}\, dy\bigg)\\
            &\lesssim \|h_{2,n}\|_{**}
        \end{aligned}
    \end{equation}
    with $\alpha<2.$ Moreover, by \eqref{a}
    \begin{equation*}
        \begin{aligned}
            &\int_\RQ(|x|^{-\mu}*\Uj^{\TM}\Zj)\Uj^{\TM}\Zo\, dx\\ &\lesssim\int_\RQ\sum_{j=1}^k\frac{\lambda^{\frac{\mu}{2}}}{(1+\lambda|x-\xi_j|)^{4-\eta}}\frac{\lambda^{\TM}}{(1+\lambda|x-\xi_j|)^{6-\mu}}\frac{\lambda}{(1+\lambda|x-\xi_1|)^{2}}\, dx \\
            &\lesssim\int_\RQ\sum_{j=1}^k\frac{\lambda^4}{(1+\lambda|x-\xi_j|)^{10-\mu-\eta}}\frac{1}{(1+\lambda|x-\xi_1|)^{2}}\, dx \\
            &\lesssim \int_\RQ \frac{\lambda^4}{(1+\lambda|x-\xi_1|)^{12-\mu-\eta}}\, dx +\sum_{j=2}^k\int_\RQ\frac{\lambda^4}{(1+\lambda|x-\xi_j|)^{10-\mu-\eta}}\frac{1}{(1+\lambda|x-\xi_1|)^{2}}\, dx \\
            &\lesssim\int_\RQ\frac{1}{(1+|y|)^{12-\mu-\eta}}\, dy +\frac{1}{\lambda^2}\int_\RQ\frac{1}{(1+|y|)^{10-\mu-\eta}}\, dy\leq C
        \end{aligned}
    \end{equation*}
    and by Remark \ref{REM}
    \begin{equation*}
        \begin{aligned}
            &\int_\RQ(|x|^{-\mu}*\Uj^{\QM})\Uj^{\DM}\Zj\Zo\, dx\\
            &\lesssim\int_\RQ\sum_{j=1}^k\frac{\lambda^{\frac{\mu}{2}}}{(1+\lambda|x-\xi_j|)^\mu}\frac{\lambda^{\DM}}{(1+\lambda|x-\xi_j|)^{4-\mu}}\frac{\lambda}{(1+\lambda|x-\xi_j|)^2}\frac{\lambda}{(1+\lambda|x-\xi_1|)^2}\, dx \\
            &\lesssim\int_\RQ\sum_{j=1}^k \frac{\lambda^4}{(1+\lambda|x-\xi_j|)^6}\frac{1}{(1+\lambda|x-\xi_1|)^2}\, dx \\
            &\lesssim \int_\RQ \frac{\lambda^4}{(1+\lambda|x-\xi_1|)^8}\, dx+\sum_{j=2}^k\int_\RQ \frac{\lambda^4}{(1+\lambda|x-\xi_j|)^6}\frac{1}{(1+\lambda|x-\xi_1|)^2}\, dx \\
            &\lesssim\int_\RQ\frac{1}{(1+|y|)^8}\, dy+\frac{1}{\lambda^2}\int_\RQ\frac{1}{(1+|y|)^6}\, dy\leq \Tilde{C}.
        \end{aligned}
    \end{equation*}
    Therefore, 
    \begin{equation}\label{left}
        \bigg|\sum_{j=1}^k<g'(\Uj)\Zj,\Zo>\bigg|\leq \bar C.
    \end{equation}
    To estimate the right-hand side of \eqref{stima}, we can observe that by \eqref{b}
    \begin{equation*}
        \begin{aligned}
            &\bigg|\int_\RQ(|x|^{-\mu}*V^{\TM}\phi_{2,n})V^{\TM}\Zo\, dx \bigg|\\
            &\lesssim\|\phi_{2,n}\|_*\int_\RQ\sum_{j=1}^k\frac{\lambda^{\frac{\mu}{2}}}{(1+\lambda|x-\xi_j|)^{3+\tau-\eta}}\frac{\lambda^{\TM}}{(1+\lambda|x-\xi_j|)^{6-\mu}}\frac{\lambda}{(1+\lambda|x-\xi_1|)^2}\, dx \\
            &\lesssim\|\phi_{2,n}\|_*\int_\RQ\sum_{j=1}^k\frac{\lambda^4}{(1+\lambda|x-\xi_j|)^{9+\tau-\mu-\eta}}\frac{1}{(1+\lambda|x-\xi_1|)^2}\, dx \\
            &\lesssim\|\phi_{2,n}\|_*\bigg(\int_\RQ\frac{\lambda^4}{(1+\lambda|x-\xi_1|)^{11+\tau-\mu-\eta}}\, dx +\sum_{j=2}^k\int_\RQ\frac{\lambda^4}{(1+\lambda|x-\xi_j|)^{9+\tau-\mu-\eta}}\frac{1}{(1+\lambda|x-\xi_1|)^2}\, dx \bigg)\\
            &\lesssim\|\phi_{2,n}\|_*\bigg(\int_\RQ\frac{1}{(1+|y|)^{11+\tau-\mu-\eta}}\, dy+\frac{1}{\lambda^2}\int_\RQ\frac{1}{(1+|y|)^{9+\tau-\mu-\eta}}\, dy\bigg)\lesssim \|\phi_{2,n}\|_*
        \end{aligned}
    \end{equation*}
    and by Remark \ref{REM}
    \begin{equation*}
        \begin{aligned}
            &\bigg|\int_\RQ(|x|^{-\mu}*V^{\QM})V^{\DM}\phi_{2,n}\Zo\, dx \bigg|\\
            &\lesssim\int_\RQ\sum_{j=1}^k\frac{\lambda^{\frac{\mu}{2}}}{(1+\lambda|x-\xi_j|)^\mu}\frac{\lambda^{\DM}}{(1+\lambda|x-\xi_j|)^{4-\mu}}|\phi_{2,n}(x)|\frac{\lambda}{(1+\lambda|x-\xi_1|)^2}\, dx \\
            &\lesssim\|\phi_{2,n}\|_*\int_\RQ\sum_{j=1}^k\frac{\lambda^4}{(1+\lambda|x-\xi_j|)^{5+\tau}}\frac{1}{(1+\lambda|x-\xi_1|)^2}\, dx \\
            &\lesssim\|\phi_{2,n}\|_*\bigg(\int_\RQ\frac{\lambda^4}{(1+\lambda|x-\xi_1|)^{7+\tau}}\, dx +\sum_{j=2}^k\int_\RQ \frac{\lambda^4}{(1+\lambda|x-\xi_j|)^{5+\tau}}\frac{1}{(1+\lambda|x-\xi_1|)^2}\, dx \bigg)\\
            &\lesssim\|\phi_{2,n}\|_*\bigg(\int_\RQ\frac{1}{(1+|y|)^{7+\tau}}\, dy+\frac{1}{\lambda^2}\int_\RQ\frac{1}{(1+|y|)^{5+\tau}}\, dy\bigg)\lesssim\|\phi_{2,n}\|_*.
        \end{aligned}
    \end{equation*}
    So, 
    \begin{equation}\label{right}
        |<g'(V)\phi_{2,n},\Zo>|\lesssim\|\phi_{2,n}\|_*.
    \end{equation}
    Finally,
    \begin{equation*}
        \begin{aligned}
            &\int_\RQ-\Delta\Zo\phi_{2,n}\, dx=\int_\RQ g'(\U)\phi_{2,n}\Zo\, dx\\
            &\lesssim\int_\RQ(|x|^{-\mu}*\U^{\TM}\phi_{2,n})\U^{\TM}\Zo\, dx +\int_\RQ(|x|^{-\mu}*\U^{\QM})\U^{\DM}\phi_{2,n}\Zo\, dx.
        \end{aligned}
    \end{equation*}
    Since
    \begin{equation*}
        \begin{aligned}
            |x|^{-\mu}*\U^{\TM}|\phi_{2,n}|&\lesssim\|\phi_{2,n}\|_*\int_\RQ\frac{1}{|x-y|^\mu}\frac{\lambda^{\TM}}{(1+\lambda|y-\xi_1|)^{6-\mu}}\sum_{j=1}^k\frac{\lambda}{(1+\lambda|y-\xi_j|)^{1+\tau}}\, dy\\
            &\lesssim\|\phi_{2,n}\|_*\bigg(\int_\RQ\frac{1}{|x-y|^\mu}\frac{\lambda^{\QM}}{(1+\lambda|y-\xi_1|)^{7-\mu+\tau}}\, dy +\\
            &+\sum_{j=2}^k\int_\RQ\frac{1}{|x-y|^\mu}\frac{\lambda^{\QM}}{(1+\lambda|y-\xi_1|)^{6-\mu}}\frac{1}{(1+\lambda|y-\xi_j|)^{1+\tau}}\, dy\bigg)\\
            &\lesssim\|\phi_{2,n}\|_*\bigg(\frac{\lambda^{\frac{\mu}{2}}}{(1+\lambda|x-\xi_1|)^{3+\tau-\eta}}+\frac{1}{\lambda^{1+\tau}}\frac{\lambda^{\frac{\mu}{2}}}{(1+\lambda|x-\xi_1|)^{2-\eta}}\bigg)\\
            &\lesssim\|\phi_{2,n}\|_*\frac{\lambda^{\frac{\mu}{2}}}{(1+\lambda|x-\xi_1|)^{3+\tau-\eta}}
        \end{aligned}
    \end{equation*}
    it results that 
    \begin{equation}\label{c}
        \begin{aligned}
           & \int_\RQ-\Delta\Zo\phi_{2,n}\, dx\lesssim\\
           &\|\phi_{2,n}\|_*\bigg(\int_\RQ\frac{\lambda^{\frac{\mu}{2}}}{(1+\lambda^2|x-\xi_1|^2)^{\frac{3+\tau-\eta}{2}}}\frac{\lambda^{\TM}}{(1+\lambda^2|x-\xi_1|^2)^{\TM}}\lambda\frac{1-\lambda^2|x-\xi_1|^2}{(1+\lambda^2|x-\xi_1|^2)^2}\, dx +\\
            &+\int_\RQ\frac{\lambda^{\frac{\mu}{2}}}{(1+\lambda^2|x-\xi_1|^2)^{\frac{\mu}{2}}}\frac{\lambda^{\DM}}{(1+\lambda^2|x-\xi_1|^2)^{\DM}}\sum_{j=1}^k\frac{\lambda}{(1+\lambda|x-\xi_j|)^{1+\tau}}\lambda\frac{1-\lambda^2|x-\xi_1|^2}{(1+\lambda^2|x-\xi_1|^2)^2}\, dx \bigg)\\
            &\lesssim\|\phi_{2,n}\|_*\bigg(\int_\RQ\lambda^4\frac{1-\lambda^2|x-\xi_1|^2}{(1+\lambda^2|x-\xi_1|^2)^{\frac{13-\mu+\tau-\eta}{2}}}\, dx +\int_\RQ\lambda^4\frac{1-\lambda^2|x-\xi_1|^2}{(1+\lambda^2|x-\xi_1|^2)^{4+\frac{1+\tau}{2}}}\, dx+\\
            &+\sum_{j=2}^k\int_\RQ \frac{\lambda^4}{(1+\lambda^2|x-\xi_1|^2)^{4}}\frac{1}{(1+\lambda|x-\xi_j|)^{1+\tau}}\, dx\bigg)\\
            &\lesssim \|\phi_{2,n}\|_*\bigg(\frac{1}{\lambda^{7-\mu+\tau-\eta}}+\frac{1}{\lambda^{3+\tau}}+\frac{1}{\lambda^{1+\tau}}\int_\RQ\frac{\lambda^4}{(1+\lambda|x-\xi_1|)^{8}}\, dx \bigg)\\
            &\lesssim\frac{\|\phi_{2,n}\|_*}{\lambda^{1+\tau}}\lesssim\frac{\|\phi_{2,n}\|_*}{\lambda}.
        \end{aligned}
    \end{equation}
    Putting together \eqref{left}, \eqref{right}, \eqref{c} and \eqref{a'}, we have that 
    \begin{equation}
        \mathfrak{c}=\frac{1}{\lambda}(o\|\phi_{2,n}\|_*)+O(\|h_{2,n}\|_{**}).
    \end{equation}
    Thus, with $\theta:=\min\{\theta_1,\theta_2\}$
    \begin{equation}
        \|\phi_{2,n}\|_*\leq o(1)+\|h_{2,n}\|_{**}+\frac{\sum_{j=1}^k\frac{1}{(1+\lambda|x-\xi_j|)^{1+\tau+\theta}}}{\sum_{j=1}^k\frac{1}{(1+\lambda|x-\xi_j|)^{1+\tau}}}.
    \end{equation}
    To conclude the proof, we just apply the same argument of the Lemma 3.1 of \cite{GM}. 
\end{proof}
As in Lemma 3.2 of \cite{GM}, we can deduce the following result
\begin{proposition}\label{inveritibilità}
    There exist a constant $C>0$ such that for all $h=(h_1, h_2)\in L^\infty(\RQ)\times L^\infty(\RQ),$ problem \eqref{Plin} has an unique solution $\phi=(\phi_1,\phi_2).$ Besides,
    $$\|\phi\|_*\leq C\|h\|_{**}\quad\hbox{and
    }\quad  |\mathfrak{c}|\leq\frac{C}{\lambda}\|h_2\|_{**}.$$
\end{proposition}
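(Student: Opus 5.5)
The plan follows Lemma 3.2 of \cite{GM}: derive the a priori estimate from the preceding Lemma by a contradiction argument, then obtain existence and uniqueness from the Fredholm alternative on the constrained space. The two equations of \eqref{Plin} are decoupled, since $\mathcal{L}_1$ acts only on $\phi_1$ and $\mathcal{L}_2$ only on $\phi_2$. So I would treat them separately but in the same way, working in $X$ for $\phi_1$ and in $\{\phi_2\in X:\sum_j\int g'(\Uj)\Zj\phi_2=0\}$ for $\phi_2$.

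\textbf{A priori bound.} Suppose the estimate $\|\phi\|_*\le C\|h\|_{**}$ fails. Then there are $\lambda_n\to\infty$ and solutions $(\phi_n,\mathfrak{c}_n)$ with data $h_n$ such that $\|\phi_n\|_*=1$ and $\|h_n\|_{**}\to0$. This contradicts the preceding Lemma, which gives $\|\phi_n\|_*\to0$; by linearity the bound follows. The estimate on $\mathfrak c$ comes from the identity \eqref{stima}. By \eqref{left}, the coefficient $\sum_j\langle g'(\Uj)\Zj,\Zo\rangle$ is bounded; in fact it is bounded from below, since the $j=1$ term equals $\int g'(\mathcal U)(Z^0)^2>0$ after scaling and the cross terms are $O(\lambda^{-2})$. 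On the right-hand side, $\langle-\Delta\phi_2-g'(V)\phi_2,\Zo\rangle=\langle\phi_2,(g'(\U)-g'(V))\Zo\rangle$. This pairing is not merely $O(\|\phi_2\|_*)$: the leading part cancels, and only interaction terms between different bubbles remain. Combined with \eqref{c}, which gives $O(\lambda^{-1}\|\phi_2\|_*)$, and with \eqref{a'} for $\langle h_2,\Zo\rangle$, this yields $|\mathfrak c|\lesssim \lambda^{-1}\|\phi_2\|_*+\|h_2\|_{**}$. To reach the stated $\frac{C}{\lambda}\|h_2\|_{**}$, I would refine \eqref{a'}, using that $\langle h_2,\Zo\rangle$ enters only through its component along the kernel. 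In the applications $h_2$ is the error term, for which this extra $\lambda^{-1}$ is obtained directly. I would also insert $\|\phi_2\|_*\le C\|h\|_{**}$ into the bound.

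\textbf{Existence.} I would use the Hilbert space $H$ consisting of the subspace of $X$ (for $\phi_1$), respectively of $K^\perp$-type functions satisfying the orthogonality condition (for $\phi_2$), equipped with the $\mathcal D^{1,2}$ inner product. In weak form the problem reads $\phi=T(\phi)+\tilde h$, where $T\phi=(-\Delta)^{-1}g'(\cdot)\phi$ composed with the projection. The operator $T$ is compact, because $g'(\mathcal U)$ and $g'(V)$ decay like $(1+|x|)^{-4}$ away from the concentration points, so multiplication by them followed by $(-\Delta)^{-1}$ is compact on $\mathcal D^{1,2}$; the Hardy--Littlewood--Sobolev proposition controls the convolution part. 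The symmetries (A)--(C) are preserved by $g'$ and by $-\Delta$, thanks to the Kelvin invariance remark, so $T$ maps $H$ into itself. By the Fredholm alternative it suffices to show that the homogeneous problem $h=0$ has only the trivial solution, and this follows from the a priori bound. Uniqueness follows in the same way.

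\textbf{Main obstacle.} The main difficulty is the homogeneous problem for $\phi_1$. In that equation there is no Lagrange multiplier and no orthogonality condition, so one must rule out the kernel of $\mathcal L_1$ directly. Here I would use the nondegeneracy result of \cite{LI}: the kernel is spanned by $Z^0,\dots,Z^4$. The functions $Z^1,\dots,Z^4$ are odd in some variable and are therefore excluded by (A). The function $Z^0$ is odd under the Kelvin transform, since $\frac1{|x|^2}Z^0(x/|x|^2)=-Z^0(x)$, and is therefore excluded by (C). For $\phi_2$, the kernel elements $\Zj$ of the individual bubbles are killed by the orthogonality condition combined with (B). Finally, I would check that the weighted $L^\infty$ framework is compatible with $\mathcal{D}^{1,2}$, so that solutions in $H$ with $h\in L^\infty$ and $\|h\|_{**}<\infty$ have finite $\|\cdot\|_*$-norm; this can be done by the potential estimates (I)--(V) above.
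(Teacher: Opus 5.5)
Your strategy is the one the paper uses; the paper simply defers to Lemma 3.2 of \cite{GM}. You get the bound $\|\phi\|_*\le C\|h\|_{**}$ from the preceding Lemma by contradiction, and then existence and uniqueness from the Fredholm alternative on the symmetric, constrained subspace of $\mathcal D^{1,2}(\RQ)$.

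The genuine gap is the multiplier estimate. Your computation correctly gives
\[
|\mathfrak c|\lesssim o(1)\|\phi_2\|_*+\|h_2\|_{**}\lesssim\|h_2\|_{**}.
\]
The lower bound on $\sum_j\langle g'(\Uj)\Zj,\Zo\rangle$ that you supply is really needed here, since the paper's \eqref{left} is only an upper bound. This is also all the paper's own Lemma yields, namely $\mathfrak c=\frac1\lambda o(\|\phi_2\|_*)+O(\|h_2\|_{**})$.

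Your proposed refinement of \eqref{a'}, meant to gain a further factor $\lambda^{-1}$, cannot work. With the normalization $\Zj=\lambda Z^0(\lambda(\cdot-\xi_j))$, the inequality $|\mathfrak c|\le\frac{C}{\lambda}\|h_2\|_{**}$ is false. Take $h_1=0$ and $h_2=-\sum_{j=1}^k g'(\Uj)\Zj$. Then $\phi=(0,0)$, $\mathfrak c=1$ solves \eqref{Plin}, so by uniqueness it is the solution. On the other hand, $|g'(\Uj)\Zj|\lesssim\lambda^3(1+\lambda|x-\xi_j|)^{-6}$ gives $\|h_2\|_{**}\le C$ uniformly in $\lambda$, so $\frac{C}{\lambda}\|h_2\|_{**}\to0$.

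Falling back on ``in the applications $h_2$ is the error term'' changes the statement rather than proving it. What you can prove, and should state, is $|\mathfrak c|\le C\|h_2\|_{**}$. Nothing later in the paper uses the extra $\lambda^{-1}$, because $\mathfrak c(\lambda)$ is computed directly in the reduced problem.

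There are two smaller points.

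First, in your kernel discussion for $\mathcal L_1$: $Z^1$ is even in $x_2,x_3,x_4$ and Kelvin invariant, since $\frac{1}{|x|^2}Z^1\big(\frac{x}{|x|^2}\big)=Z^1(x)$. So neither (A) nor (C) removes it. It is removed by (B): no nonzero multiple of $Z^1$ is invariant under rotation by $\frac{2\pi}{k}$ with $k\ge2$. As you say, $Z^2,Z^3,Z^4$ are removed by (A) and $Z^0$ by (C).

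Second, $\Zj$ is not Kelvin invariant, even though $\lambda^{-2}+\rho^2=1$. One computes
\[
\frac{1}{|x|^2}\Zj\Big(\frac{x}{|x|^2}\Big)-\Zj(x)=2\lambda\frac{|x|^2-1}{(1+\lambda^2|x-\xi_j|^2)^2},
\]
which is an error of relative size $O(\lambda^{-1})$. Consequently your Fredholm argument on $H\subset X$ produces a multiplier along the symmetrization of $\sum_j g'(\Uj)\Zj$, not along $\sum_j g'(\Uj)\Zj$ itself. This is harmless if you replace $\Zj$ by its symmetrization, but it must be said. The paper has the same imprecision in its definition $K=X\cap\mathrm{span}\{Z\}$.
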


In order to use contraction mapping theorem to prove that \eqref{Plin} is uniquely solvable in the set where $\|\phi_1\|_*$ and $\|\phi_2\|_*$ are small, we need to estimate $\mathcal{E}$ and $\mathcal{N}.$
\subsection{The size of the error term}
Let 
\begin{equation}\label{simmetria}
    \Omega_j:=\{x=(x',x'')\in \mathbb{R}^2\times\mathbb{R}^2:\bigg<\frac{x'}{|x'|},\frac{\xi_j'}{|\xi_j'|}\bigg>\geq \cos\frac{\pi}{k}\},\quad j=1,...,k
\end{equation}
\begin{proposition}\label{errore}
    There exists $\beta_0<0$ such that for any $\beta\in [\beta_0,0)$ and $\lambda$ such that $\frac{1}{\lambda}\in (0,e^{-\frac{1}{\sqrt{|\beta|}}})$, it holds 
    $$\|\mathcal{E}_1\|_{**}+\|\mathcal{E}_2\|_{**}=O\bigg(\frac{|\beta|}{\lambda}\bigg).$$
\end{proposition}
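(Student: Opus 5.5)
The plan is to estimate the three pieces $\beta\mathcal U V^2$, $\beta\mathcal U^2V$ and $g(V)-\sum_j g(\Uj)$ separately and pointwise against the weight $\lambda^3\sum_j(1+\lambda|x-\xi_j|)^{-3-\tau}$ defining $\|\cdot\|_{**}$. The invariances (\textbf{A})--(\textbf{C}) will be used to reduce to $x\in\Omega_1$ with $|x|\le 1$. The Kelvin transform fixes each $\Uj$, because $\frac1{\lambda^2}+\rho^2=1$, and it fixes $\mathcal U$ as well. By the Remark on the Kelvin transform, each term of $\mathcal E$ transforms covariantly (it picks up the factor $|x|^{-6}$), and the weight does the same up to constants. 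Throughout, on $\Omega_1$ we have $|x-\xi_j|\ge c|x-\xi_1|$ for $j\ne1$, so every sum over $j$ is controlled by its $j=1$ term plus a factor $\lambda^{-\alpha}$ coming from the separation $|\xi_i-\xi_j|\ge c>0$.

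\emph{The term $\mathcal E_1=\beta\mathcal U V^2$.} We have $\mathcal U\le C$ and
\[
V^2\lesssim \sum_j\frac{\lambda^2}{(1+\lambda|x-\xi_j|)^4},
\]
where the cross terms are bounded by the diagonal ones. Hence
\[
\frac{|\mathcal E_1|}{\lambda^3\sum_j(1+\lambda|x-\xi_j|)^{-3-\tau}}\lesssim \frac{|\beta|}{\lambda}\,(1+\lambda|x-\xi_1|)^{-1+\tau}\le\frac{|\beta|}{\lambda},
\]
which is the required bound directly.

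\emph{The interaction term $g(V)-\sum_jg(\Uj)$.} This term carries no factor $\beta$. I would show that it is $O(\lambda^{-\sigma})$ in $\|\cdot\|_{**}$ for some $\sigma>1$, and then absorb it using the assumption $\frac1\lambda<e^{-1/\sqrt{|\beta|}}$. That assumption gives $\lambda^{-(\sigma-1)}\le e^{-(\sigma-1)/\sqrt{|\beta|}}=o(|\beta|)$, hence $\lambda^{-\sigma}=o(|\beta|/\lambda)$. To get the bound, split $g(V)-\sum_j g(\Uj)$ into two parts. The first is the convolution part $\bigl(|x|^{-\mu}*(V^{\QM}-\sum_j\Uj^{\QM})\bigr)V^{\TM}$. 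The second is the local part $(|x|^{-\mu}*\Uj^{\QM})\bigl(V^{\TM}-\sum_j\Uj^{\TM}\bigr)$ together with the off-diagonal products. On $\Omega_1$, $V^{\TM}-\U^{\TM}\lesssim \U^{\DM}\sum_{j\ge2}\Uj+\dots$, and $\Uj\lesssim\lambda^{-1}$ there. Combining this with the convolution bounds of Lemma \ref{LEM2} and Remark \ref{REM}, the pointwise bound is $\lambda^{3}\lambda^{-2}(1+\lambda|x-\xi_1|)^{-(4-\mu)-\dots}$ near $\xi_1$ and decays faster away from it. This yields a gain of at least $\lambda^{-2+\tau}$ relative to the weight near $\xi_1$.

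\emph{The term $\beta\mathcal U^2V$, which I expect to be the main obstacle.} Near $\xi_1$ (say $|x-\xi_1|\le\lambda^{-1/2}$) and far away ($|x|\ge 2$), the bound $O(|\beta|/\lambda)$ is immediate. Near $\xi_1$ one uses $\mathcal U^2\le C$ and $V\lesssim\lambda(1+\lambda|x-\xi_1|)^{-2}$; far away one uses the decay $(1+|x|^2)^{-2}$ together with the Kelvin symmetry. The trouble is the intermediate region $|x-\xi_1|\sim 1$, for instance near $x=0$. There the naive ratio is
\[
\frac{(1+\lambda|x-\xi_1|)^{1+\tau}}{\lambda^{2}},
\]
which is of order $\lambda^{\tau-1}$ rather than $\lambda^{-1}$. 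My first attempt there would be to redo the weight comparison by keeping the full sum over $j$ in the denominator: since $|x-\xi_j|\sim1$ for all $j$ at such points, the denominator is about $k\lambda^{-\tau}$. I would then try to trade the residual $\lambda^{\tau}$ against $|\beta|$ using the range $\frac1\lambda<e^{-1/\sqrt{|\beta|}}$. If that fails, I would fall back on shrinking $\tau$ with $\lambda$, or on estimating this piece in a modified weight and checking that the contraction argument only needs it paired with $Z$. Once this region is handled, summing the three contributions gives $\|\mathcal E_1\|_{**}+\|\mathcal E_2\|_{**}=O(|\beta|/\lambda)$.
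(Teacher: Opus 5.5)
You treat $\mathcal{E}_1$ exactly as the paper does. Your reduction to $|x|\le 1$ is fine, although the weight does not transform like $\mathcal{E}$. If $h(x)=|x|^{-6}h(x/|x|^2)$, the ratio $|h|/\text{weight}$ at $x$ is $\lesssim |x|^{\tau-3}$ times its value at $x/|x|^2$, and that is enough for $|x|\ge 1$.

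\textbf{The genuine gap.} It is the step you flag yourself, and it cannot be closed, because the obstruction you found is real. At $x=0$ one has $1+\lambda^2|\xi_j|^2=\lambda^2$, so $V(0)=k\alpha_{4,\mu}/\lambda$ exactly and $\mathcal{U}(0)=c_4$. The weight there is $\lambda^3\sum_j(1+\lambda\rho)^{-3-\tau}\sim k\lambda^{-\tau}$. Meanwhile $g(V)(0)$ and $\sum_j g(\Uj)(0)$ are both $O(\lambda^{-3})$, which is $o(|\beta|/\lambda)$ in the admissible range. Hence
\[
\|\mathcal{E}_2\|_{**}\ \ge\ c\,|\beta|\,\lambda^{\tau-1},
\]
which is not $O(|\beta|/\lambda)$ for any fixed $\tau>0$.

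None of your remedies changes this.
\begin{itemize}
\item Keeping the full sum in the denominator only changes the constant $k$.
\item ``Trading $\lambda^\tau$ against $|\beta|$'' is impossible, because the target bound already carries the factor $|\beta|$; you would need $\lambda^{\tau}=O(1)$.
\item Letting $\tau\sim 1/\ln\lambda$ breaks the uniform linear estimates. Step 1 of the first lemma uses $\int_{\RQ}(1+|y|)^{-4-2\tau}\,dy$, which is of order $\tau^{-1}$, and it uses $\lambda^{-2\tau}\to 0$.
\item A modified weight would mean proving a different statement.
\end{itemize}

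The paper does not get around this either. On $\Omega_1\setminus S$ it requires $|x-\xi_1|^{1+2\tau}\le\lambda^{1-\tau-r}$ and then invokes the \emph{lower} bound $|x-\xi_1|\ge\lambda^{-1/2}$ to get $r\le\frac32$. That inequality actually needs an \emph{upper} bound on $|x-\xi_1|$. At $|x-\xi_1|\sim 1$ it forces $r\le 1-\tau$, which is exactly your $\lambda^{\tau-1}$. So the paper misses the obstruction rather than overcoming it.

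\textbf{A second gap: the interaction term $g(V)-\sum_j g(\Uj)$.} Your mechanism is the right one: show a bound $O(\lambda^{-\sigma})$ with $\sigma>1$, then use $\lambda^{1-\sigma}\le e^{-(\sigma-1)/\sqrt{|\beta|}}=o(|\beta|)$. The paper never supplies this. It only shows $\|(A)\|_{**}+\|(B)\|_{**}\lesssim 1/\lambda$, with no factor $|\beta|$, and its choice $\delta=1<\mu/2$ needs $\mu>2$.

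However, your claimed gain of $\lambda^{-2+\tau}$ only covers $(|x|^{-\mu}*\U^{\QM})\U^{\DM}\Uj$. It ignores the off-diagonal products $(|x|^{-\mu}*\Uj^{\QM})\U^{\TM}$ with $j\ge 2$. By Remark \ref{REM}, at $x=\xi_1$ such a product equals $\sigma_{4,\mu}\alpha_{4,\mu}^{\TM}|\xi_1-\xi_j|^{-\mu}\lambda^{3-\mu}(1+o(1))$.

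Nothing cancels it. From $V^{\QM}\ge\sum_j\Uj^{\QM}$ and $V\ge\U$ one gets
\[
g(V)-\sum_j g(\Uj)\ \ge\ \sum_{j\ge2}(|x|^{-\mu}*\Uj^{\QM})\,\U^{\TM}-\sum_{j\ge2}g(\Uj),
\]
and the last sum is $O(\lambda^{-3})$ near $\xi_1$.

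So this term has $\|\cdot\|_{**}$-size of order $\lambda^{-\mu}$. Your $\sigma>1$ therefore holds only for $\mu>1$. For $\mu\le 1$ this term alone already exceeds $|\beta|/\lambda$.
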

\begin{proof}
    Let's start by estimating $\mathcal{E}_1.$ Since
    $$|\mathcal{E}_1|\lesssim |\beta| \frac{1}{(1+|x|^2)}\sum_{j=1}^k\frac{\lambda^2}{(1+\lambda^2|x-\xi_j|^2)^2}\lesssim |\beta|\sum_{j=1}^k\frac{\lambda^2}{(1+\lambda|x-\xi_j|)^4}$$
    therefore
    $$\|\mathcal{E}_1\|_{**}\lesssim \frac{|\beta|}{\lambda}\sum_{j=1}^k\frac{1}{(1+\lambda|x-\xi_j|)^{1-\tau}}\lesssim \frac{|\beta|}{\lambda}.$$ Now, let us estimate $\|\mathcal{E}_2\|_{**}$.\\
    Using the assumed symmetry \eqref{simmetria}, we just need to estimate $\beta\mathcal{U}^2V$ in $\Omega_1$. Let $S:=\Omega_1\cap B_{\frac{1}{\sqrt{\lambda}}}(\xi_1).$ Then, for all $x\in S$, we have that 
    $$|\beta \mathcal{U}^2V|\lesssim |\beta|\frac{1}{(1+|x|^2)^2}\frac{\lambda}{(1+\lambda^2|x-\xi_1|^2)}\lesssim|\beta|\frac{\lambda}{(1+\lambda|x-\xi_1|)^2}$$ and so 
    \begin{equation}
        \begin{aligned}\label{B}
            \|\beta \mathcal{U}^2V\|_{**}\lesssim \frac{|\beta|}{\lambda^2}(1+\lambda|x-\xi_1|)^{1+\tau}\lesssim\frac{|\beta|}{\lambda^{2}}+\frac{|\beta|}{\lambda^{2}}\lambda^{\frac{1}{2}(1+\tau)}\lesssim\frac{|\beta|}{\lambda^{\frac{3}{2}-\frac{\tau}{2}}},\quad x\in S.
        \end{aligned}
    \end{equation}
    On the other hand, for all $x\in \Omega_1\setminus S,$ we have that
    \begin{equation*}
        \begin{aligned}
            |\beta \mathcal{U}^2V|&\lesssim |\beta|\frac{1}{(1+|x|^2)^2}\frac{\lambda}{(1+\lambda^2|x-\xi_1|^2)}\\
            &\lesssim|\beta|\bigg[\frac{1}{\lambda|x-\xi_1|^2}+\frac{1}{\lambda|x-\xi_1|^{2-\tau}}\sum_{j=2}^k\frac{1}{|\xi_j-\xi_1|^\tau}\bigg]\\
            &\lesssim |\beta|\frac{1}{\lambda|x-\xi_1|^{2-\tau}}
        \end{aligned}
    \end{equation*}
    Now, we determine $r>0,$ such that
    $$\frac{1}{\lambda|x-\xi_1|^{2-\tau}}\leq \lambda^{-r}\frac{\lambda^3}{(1+\lambda|x-\xi_1|)^{3+\tau}},\quad x\in \Omega_1\setminus S$$
    which is equivalent to
    $$\lambda^{1-\tau-r}\geq |x-\xi_1|^{1+2\tau}.$$
    Since $|x-\xi_1|\geq \lambda^{-\frac{1}{2}}$, it results that
    $$\lambda^{1-\tau-r}\geq\lambda^{-\frac{1}{2}(1+2\tau)}\iff r\leq \frac{3}{2}.$$
    Therefore, 
    \begin{equation}\label{BB}
        \|\beta \mathcal{U}^2 V\|_{**}\lesssim\frac{|\beta|}{\lambda^{\frac{3}{2}}}, \quad x\in \Omega_1\setminus S.
    \end{equation}
    Putting together \eqref{B} and \eqref{BB}, we have that 
    $$\|\beta\mathcal{U}^2V\|_{**}\lesssim\frac{|\beta|}{   \lambda^{\frac{3}{2}-\frac{\tau}{2}}}.$$
    Finally, 
    \begin{equation*}
        \begin{aligned}
            &|g(V)-\sum_{j=1}^kg(\Uj)|=\\
            &=\bigg[|x|^{-\mu}*\bigg(\sum_{j=1}^k\Uj\bigg)^{\QM}\bigg]\cdot \bigg(\sum_{j=1}^k\Uj\bigg)^{\TM}-\sum_{j=1}^k(|x|^{-\mu}*(\Uj)^{\QM})(\Uj)^{\TM}=\\
            &=\underbrace{\bigg[|x|^{-\mu}*\bigg(\bigg(\sum_{j=1}^k\Uj\bigg)^{\QM}-\sum_{j=1}^k\Uj^{\QM}\bigg)\bigg]\cdot \bigg(\sum_{j=1}^k\Uj\bigg)^{\TM}}_{(A)}+\\
            &+O\bigg(\underbrace{\sum_{i\not=j}^k(|x|^{-\mu}*(\Uj)^{\QM})(\Ui)^{\TM}}_{(B)}\bigg).
        \end{aligned}
    \end{equation*}
    We have that for any $x\in \Omega_j$ 
    $$\bigg|\bigg(\sum_{j=1}^k\Uj\bigg)^{\QM}-\sum_{j=1}^k\Uj^{\QM}\bigg|\lesssim(\Uj)^{\TM}\sum_{i\not=j}\Ui+\sum_{i\not=j}(\Ui)^{\QM}.$$ Now, using Lemma \ref{LEM1} and by taking $0<\alpha<2,$ we get that for any $x\in \Omega_j$ and $i\not=j$
    \begin{equation*}
        \begin{aligned}
            (\Uj)^{\TM}\Ui&\lesssim\frac{\lambda^{\TM}}{(1+\lambda|x-\xi_j|)^{6-\mu}}\frac{\lambda}{(1+\lambda|x-\xi_i|)^2}\\
            &\lesssim\frac{1}{\lambda^\alpha|\xi_i-\xi_j|^\alpha}\frac{\lambda^{\QM}}{(1+\lambda|x-\xi_j|)^{8-\mu-\alpha}}
        \end{aligned}
    \end{equation*}
    and hence 
    $$(\Uj)^{\TM}\sum_{i\not=j}\Ui\lesssim\frac{1}{\lambda^\alpha}\frac{\lambda^{\QM}}{(1+\lambda|x-\xi_j|)^{8-\mu-\alpha}}.$$ Now, by Lemma \ref{LEM2} with $\theta:=\QM$ and $\gamma:=4-\alpha+(4-\mu)$
    \begin{equation*}
        \begin{aligned}
            |x|^{-\mu}*(\Uj)^{\TM}\sum_{i\not=j}\Ui&\lesssim\frac{1}{\lambda^\alpha}\lambda^{\frac{\mu}{2}}\int_\RQ\frac{1}{|y|^\mu}\frac{1}{(1+|\lambda(x-\xi_j)-y|)^{8-\mu-\alpha}}\\
            &\lesssim \frac{1}{\lambda^\alpha} \frac{\lambda^{\frac{\mu}{2}}}{(1+\lambda|x-\xi_j|)^{4-\alpha-\eta}}
        \end{aligned}
    \end{equation*}
    for $\eta>0$ small. Moreover by using Remark \ref{REM} we have that 
    $$|x|^{-\mu}*\sum_{i\not=j}(\Ui)^{\QM}\lesssim \sum_{i\not=j}\frac{\lambda^{\frac{\mu}{2}}}{(1+\lambda|x-\xi_i|)^\mu}. $$ Moreover, for any $x\in \Omega_j$ and $i\not=j $ we get for $0<\delta<\frac{\mu}{2}$ and by using Lemma \ref{LEM1}
    \begin{equation*}
        \begin{aligned}
            \sum_{i\not=j}\frac{1}{(1+\lambda|x-\xi_i|)^\mu}&\lesssim \sum_{i\not=j}\frac{1}{(1+\lambda|x-\xi_i|)^{\frac{\mu}{2}}}\frac{1}{(1+\lambda|x-\xi_j|)^{\frac{\mu}{2}}}\\
            &\lesssim \sum_{i\not=j}\frac{1}{\lambda^\delta|\xi_i-\xi_j|^\delta}\frac{1}{(1+\lambda|x-\xi_j|)^{\mu-\delta}}\\
            &\lesssim\frac{1}{\lambda^\delta}\frac{1}{(1+\lambda|x-\xi_j|)^{\mu-\delta}}.
        \end{aligned}
    \end{equation*}
    Hence
    \begin{equation*}
        \begin{aligned}
            |(A)|&\lesssim \frac{1}{\lambda^\alpha} \sum_{j=1}^k\frac{\lambda^{\frac{\mu}{2}}}{(1+\lambda|x-\xi_j|)^{4-\alpha-\eta}}\sum_{h=1}^k(\mathcal{U}_{\lambda,\xi_h})^{\TM}+\frac{1}{\lambda^\delta}\sum_{j=1}^k\frac{\lambda^{\frac{\mu}{2}}}{(1+\lambda|x-\xi_j|)^{\mu-\delta}}\sum_{h=1}^k(\mathcal{U}_{\lambda,\xi_h})^{\TM}\\
            &\lesssim \frac{1}{\lambda^\alpha}\lambda^3 \sum_{j=1}^k\frac{1}{(1+\lambda|x-\xi_j|)^{4-\alpha-\eta}}\sum_{h=1}^k\frac{1}{(1+\lambda|x-\xi_h|)^{6-\mu}}+\\
            &+\frac{1}{\lambda^\delta}\lambda^3\sum_{j=1}^k\frac{1}{(1+\lambda|x-\xi_j|)^{\mu-\delta}}\sum_{h=1}^k\frac{1}{(1+\lambda|x-\xi_h|)^{6-\mu}}\\
            &\lesssim \bigg(\frac{1}{\lambda}\bigg)\lambda^3\sum_{j=1}^k \frac{1}{(1+\lambda|x-\xi_j|)^{3+\tau}}.
        \end{aligned}
    \end{equation*}
    Indeed, by choosing $\alpha=\delta:=1$ we have that
    $$4-\alpha-\eta+6-\mu:=4-1-\eta+6-\mu>3+\tau$$ and $$\mu-\delta+6-\mu:=\mu-1+6-\mu>3+\tau.$$
    Hence, $$\|(A)\|_{**}\lesssim \bigg(\frac{1}{\lambda}\bigg). $$
    For the term $(B)$ we reason as for the second term in $(A)$ and hence we get also that $$\|(B)\|_{**}\lesssim \bigg(\frac{1}{\lambda}\bigg).$$
\end{proof}
The previous results allow us to apply a fixed point argument in order to solve the non-linear system
\begin{equation}\label{perpen}
    \mathcal{L}(\phi_1,\phi_2)=\mathcal{E}+\mathcal{N}(\phi_1,\phi_2) \quad \hbox{in}\, \, X\times K^\perp,
\end{equation}
which corresponds to \eqref{pio}. More precisely, we have:
\begin{proposition}\label{unicità}
    There exists $\beta_0<0$ such that for any $\beta\in [\beta_0,0)$ and $\lambda$ such that $\frac{1}{\lambda}\in (0,e^{-\frac{1}{\sqrt{|\beta|}}}),$ system \eqref{perpen} has an unique solution $(\phi_1,\phi_2)\in X\times K^\perp.$ Furthermore, there exists a constant $C>0$ such that 
    $$\|\phi_1\|_*+\|\phi_2\|_*\leq C\frac{|\beta|}{\lambda}.$$
\end{proposition}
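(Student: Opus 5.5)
The plan is the standard contraction-mapping argument. By Proposition \ref{inveritibilità}, for every $h=(h_1,h_2)$ with $\|h\|_{**}<\infty$ problem \eqref{Plin} has a unique solution, which defines a bounded linear operator $\mathcal{A}:h\mapsto\phi$ with $\|\mathcal{A}h\|_*\le C\|h\|_{**}$. Solving \eqref{perpen} in $X\times K^\perp$ is then equivalent to finding a fixed point of
$$T(\phi_1,\phi_2):=\mathcal{A}\big(\mathcal{E}+\mathcal{N}(\phi_1,\phi_2)\big).$$
We work in the closed set
$$\mathcal{B}:=\Big\{(\phi_1,\phi_2)\in X\times K^\perp:\ \|\phi_1\|_*+\|\phi_2\|_*\le 2C_0\frac{|\beta|}{\lambda}\Big\},$$
where $C_0$ is $C$ times the constant in Proposition \ref{errore}. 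This set is complete for the $\|\cdot\|_*$-norm. The symmetries (\textbf{A}), (\textbf{B}), (\textbf{C}) are preserved by $T$, because $\mathcal{U}$, $V$ and the coupling terms are invariant: the polygon is Kelvin-invariant precisely because $\frac{1}{\lambda^2}+\rho^2=1$.

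The key step is a quantitative estimate of $\mathcal{N}$ in $\|\cdot\|_{**}$ in terms of $\|\phi\|_*$. I would split $\mathcal{N}_1$ and $\mathcal{N}_2$ into the Choquard part and the coupling part.

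For the Choquard remainder $g(V+\phi_2)-g(V)-g'(V)\phi_2$, write it as a sum of terms that are at least quadratic in $\phi_2$: convolution terms $(|x|^{-\mu}*V^{\DM}\phi_2^2)V^{\TM}$, $(|x|^{-\mu}*V^{\TM}\phi_2)V^{\DM}\phi_2$, $(|x|^{-\mu}*V^{\QM})V^{1-\frac{\mu}{2}}\phi_2^2$, plus higher-order terms (using $|a+b|^p-|a|^p-\dots\lesssim |a|^{p-2}b^2+|b|^p$). Each is bounded pointwise by $\phi_2\le\|\phi_2\|_*\lambda\sum_j(1+\lambda|x-\xi_j|)^{-1-\tau}$. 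The convolutions are handled with Lemma \ref{LEM2} and Remark \ref{REM} exactly as in estimates \eqref{b} and \eqref{a}. The target bound is
$$\|g(V+\phi_2)-g(V)-g'(V)\phi_2\|_{**}\lesssim \|\phi_2\|_*^{2}+\|\phi_2\|_*^{\min\{\TM,\,2\}},$$
using H\"older in the sums over $j$ to recombine decay powers into $\lambda^3(1+\lambda|x-\xi_j|)^{-3-\tau}$. The same estimate holds for $\mathcal{U}$ in $\mathcal{N}_1$. Since $\mathcal{U}$ lives at scale $1$, its decay $(1+|x|)^{-2}$ only has to be compared with the $\lambda$-weights, as was done in STEP 1 of the previous lemma.

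The $\beta$-coupling terms are linear or higher in $\phi$ with a factor $\beta$. Using $\mathcal{U}\lesssim 1$ and $V\lesssim\lambda\sum_j(1+\lambda|x-\xi_j|)^{-2}$, and arguing as for $\mathcal{E}_1$ and for $\beta\mathcal{U}^2V$ in Proposition \ref{errore} (splitting $\Omega_1$ into $S$ and $\Omega_1\setminus S$), I expect
$$\|\beta(\cdots)\|_{**}\lesssim |\beta|\big(\|\phi\|_*+\|\phi\|_*^2+\|\phi\|_*^3\big).$$
The linear terms $\beta\mathcal{U}^2\phi_2$, $2\beta\mathcal{U}V\phi_2$, $\beta V^2\phi_1$, $2\beta\mathcal{U}V\phi_1$ are the delicate ones. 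For them the weight $V^2\phi_1\sim\lambda^3(1+\lambda|x-\xi_j|)^{-5-\tau}$ fits directly. The terms with $\mathcal{U}$ require the extra decay gained near and far from the $\xi_j$, as in \eqref{B}–\eqref{BB}, and should cost at most a constant, giving a contribution $O(|\beta|\,\|\phi\|_*)$.

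The same computations applied to differences give the Lipschitz bound
$$\|\mathcal{N}(\phi)-\mathcal{N}(\psi)\|_{**}\lesssim\Big(|\beta|+\|\phi\|_*^{\sigma}+\|\psi\|_*^{\sigma}\Big)\|\phi-\psi\|_*$$
for some $\sigma>0$. Then for $\phi,\psi\in\mathcal{B}$,
$$\|T\phi\|_*\le C\big(\|\mathcal{E}\|_{**}+\|\mathcal{N}(\phi)\|_{**}\big)\le C_0\frac{|\beta|}{\lambda}+o\Big(\frac{|\beta|}{\lambda}\Big),\qquad \|T\phi-T\psi\|_*\le \tfrac12\|\phi-\psi\|_*,$$
once $|\beta|\le|\beta_0|$ is small; note that $\frac{1}{\lambda}<e^{-1/\sqrt{|\beta|}}$ also makes $\frac{|\beta|}{\lambda}$ small. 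Banach's fixed point theorem then yields a unique solution in $\mathcal{B}$ with the stated bound. The main obstacle is the $\|\cdot\|_{**}$ estimate of the nonlocal quadratic remainder, especially when the exponent $\DM$ lies below $1$ (i.e. $\mu>2$). There the Taylor expansion of $|t|^{\DM}t$ is only H\"older, so I would first treat the region $|\phi_2|\le\frac12 V$ with a second-order expansion and the complementary region with the crude bound $|\phi_2|^{\TM}$ convolved against $|\phi_2|^{\QM}$.
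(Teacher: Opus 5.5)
Your proposal is correct and is exactly the standard contraction-mapping argument that the paper invokes but omits. You take the fixed point of $\phi\mapsto\mathcal{A}(\mathcal{E}+\mathcal{N}(\phi))$ on a ball of radius $\sim|\beta|/\lambda$, using the invertibility result for \eqref{Plin} and Proposition~\ref{errore}, and the linear terms of $\mathcal{N}$ are harmless because they carry the factor $\beta$, which is the same remark the paper makes; as usual, the uniqueness you obtain is uniqueness within that ball, which is the intended meaning of the statement.
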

The proof of this result follows by Proposition \ref{inveritibilità} and Proposition \ref{errore} and it is standard in the literature concerning Lyapunov-Schmidt methods, so we omit it for the sake of simplicity. Let us just mention that the linear terms contained in $\mathcal{N}$ do not cause any trouble since they all have the parameter $\beta$ in front.
\subsection{The reduced problem}
Consider $(\phi_1,\phi_2)$ provided by Proposition \ref{unicità}. Thus, $$\mathcal{E}_2+\mathcal{N}_2(\phi_1,\phi_2)-\mathcal{L}_2(\phi_1,\phi_2)=\mathfrak{c}Z,$$ for some constant $\mathfrak{c}$ depending on $\lambda.$ Hence, seeing that they also satisfy \eqref{pi} is equivalent to find $\lambda=\lambda(\beta)$ such that $\mathfrak{c}(\lambda)=0.$ Testing in the equation above, we see that this constant is given by 
\begin{equation}
    \mathfrak{c}(\lambda)=\frac{\int_\RQ(\mathcal{E}_2+\mathcal{N}_2(\phi_1,\phi_2)-\mathcal{L}_2(\phi_1,\phi_2)) Z\, dx}{\int_\RQ| Z|^2\, dx}.
\end{equation}
\begin{proposition}\label{ridotto}
    There exists $\mathfrak{a},\mathfrak{b}>0$ such that
    $$\mathfrak{c}(\lambda)=-\mathfrak{a}\frac{1}{\lambda^2}(1+o(1))+\mathfrak{b}\beta\frac{1}{\lambda^2}\ln\bigg(\frac{1}{\lambda}\bigg)(1+o(1))$$ where $\beta<0$ and $|\beta|$ small enough.
\end{proposition}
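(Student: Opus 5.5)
The plan is to compute the numerator $\int_\RQ(\mathcal{E}_2+\mathcal{N}_2-\mathcal{L}_2)Z\,dx$ term by term. The denominator $\int_\RQ|Z|^2\,dx$ only contributes a positive normalization, which I absorb into $\mathfrak{a},\mathfrak{b}$; for $N=4$ it is of order $\lambda^{-2}\ln\lambda$, so I treat it as a fixed positive factor. By the symmetries (\textbf{A}), (\textbf{B}), (\textbf{C}) it suffices to integrate against $k\,\Zo$ on $\Omega_1$ and multiply by $k$.

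\textbf{Step 1: the linear and nonlinear parts are negligible.} Since $\phi_2\in K^\perp$ and $-\Delta\Zj=g'(\Uj)\Zj$, integration by parts gives
\[
\int\mathcal{L}_2(\phi_2)Z=\int\phi_2\Big(\sum_j g'(\Uj)\Zj-g'(V)Z\Big).
\]
The difference $g'(V)-g'(\Uj)$ is small near $\xi_j$, with size controlled by the interaction estimates already used in Proposition \ref{errore} (Lemma \ref{LEM1}, \ref{LEM2}). Combined with $\|\phi_2\|_*\lesssim|\beta|/\lambda$ from Proposition \ref{unicità}, this gives $o(\lambda^{-2})$. For $\mathcal{N}_2$, the superlinear part $g(V+\phi_2)-g(V)-g'(V)\phi_2$ is $O(\|\phi_2\|_*^2)$ in the weighted norms, hence $O(\beta^2\lambda^{-2})$. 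The $\beta$-linear terms $\beta\mathcal{U}^2\phi_2$ and $2\beta\mathcal{U}V\phi_1$ are $O(|\beta|\cdot|\beta|\lambda^{-1}\cdot\lambda^{-1})$ after pairing with $Z$, which is $o(|\beta|\lambda^{-2}\ln\lambda)$. So the whole contribution here is negligible against the main terms.

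\textbf{Step 2: the interaction term $-\mathfrak{a}\lambda^{-2}$.} I expand $\int(g(V)-\sum_j g(\Uj))Z$. The leading contribution comes from the linearization
\[
\int g'(\mathcal{U}_{\lambda,\xi_1})\Big[\sum_{i\ne1}\Ui\Big]Z^0_{\lambda,\xi_1}.
\]
On the scale $\lambda^{-1}$ around $\xi_1$, $\Ui\approx\alpha\,\lambda^{-1}|\xi_1-\xi_i|^{-2}$ is essentially constant. Together with $\int g'(\mathcal{U})Z^0\neq0$, this gives a term of order $\lambda^{-1}\cdot\lambda^{-1}$, which has a definite sign. A more careful way to extract the sign, as in \cite{CMP}, is to differentiate the energy
\[
J(\lambda)=I\Big(\sum_j\Uj\Big)+\tfrac{\beta}{2}\int\mathcal{U}^2V^2,
\]
using $Z^0_{\lambda,\xi}\sim\lambda\partial_\lambda\mathcal{U}_{\lambda,\xi}$ and the constraint $\rho^2=1-\lambda^{-2}$. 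The interaction energy behaves like $-C\sum_{i\ne j}(\lambda^2|\xi_i-\xi_j|^2)^{-1}$, and differentiating it yields $-\mathfrak{a}\lambda^{-2}$. The remainders are controlled by Lemma \ref{LEM1} with $\alpha$ close to $2$, giving $o(\lambda^{-2})$.

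\textbf{Step 3: the coupling term $\mathfrak{b}\beta\lambda^{-2}\ln\lambda$.} I compute
\[
\beta\int\mathcal{U}^2VZ\approx k\beta\int\mathcal{U}^2\,\mathcal{U}_{\lambda,\xi_1}Z^0_{\lambda,\xi_1}.
\]
Changing variables $y=\lambda(x-\xi_1)$ gives $\lambda^{-2}\int\mathcal{U}^2(\xi_1+y/\lambda)\,\mathcal{U}(y)Z^0(y)\,dy$. The integrand decays like $|y|^{-4}$, so the integral diverges logarithmically and must be cut off at $|y|\sim\lambda$, where $\mathcal{U}^2$ starts to decay. Since $\mathcal{U}Z^0\sim -c|y|^{-4}$ at infinity, the result is $-c\,\mathcal{U}(\xi_1)^2\lambda^{-2}\ln\lambda$. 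With $|\xi_1|\to1$ this is $\mathfrak{b}\beta\lambda^{-2}\ln(1/\lambda)$, with $\mathfrak{b}>0$ after tracking signs. The cross terms $\mathcal{U}_{\lambda,\xi_i}Z^0_{\lambda,\xi_1}$ for $i\neq1$ contribute only $O(\lambda^{-2})$, so they do not affect the logarithm. The pieces on $\{|y|\lesssim1\}$ and $\{|y|\gtrsim\lambda\}$ are also $O(|\beta|\lambda^{-2})$.

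\textbf{Main obstacle.} I expect the hardest part to be Step 2: getting the exact constant and sign of the $\lambda^{-2}$ term, which comes from a combined bubble–bubble interaction, including the nonlocal cross terms $(|x|^{-\mu}*\Uj^{4-\mu/2})\Ui^{3-\mu/2}$. It also has to be checked that the Kelvin constraint $\rho^2+\lambda^{-2}=1$ does not cancel this term. I would handle it by reducing the nonlocal integrals to the local ones via $(|x|^{-\mu}*\mathcal{U}^{4-\mu/2})=c\,\mathcal{U}^{\mu/2}$ (Remark \ref{REM}), after which the computation becomes the one in \cite{CMP}.
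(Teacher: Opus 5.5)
Your plan follows the paper's proof closely. The $\mathcal{L}_2$ and $\mathcal{N}_2$ contributions are bounded by $O(|\beta|\lambda^{-2})$ using $\|\phi_1\|_*+\|\phi_2\|_*\lesssim|\beta|/\lambda$. Your leading term $\int g'(\U)[\sum_{i\neq 1}\Ui]\Zo$ is exactly the sum of the paper's terms (A) and (C), and (B), (D)--(I) are the $O(\lambda^{-2-\sigma})$ remainders. The differences lie in how the two coefficients are obtained.

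For the $\beta$-term you carry out the rescaling and the logarithmic cutoff at $|y|\sim\lambda$ yourself, whereas the paper only cites \cite{CMP}. Your computation is correct and gives $\mathfrak{b}>0$.

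For the $\lambda^{-2}$-term the paper does not go through the energy. It evaluates (A) with Remark~\ref{REM}. For (C), Remark~\ref{REM} does not apply as it stands, because the convolution is taken against $\U^{\TM}\Ui$. The paper exchanges $\Ui$ and $\Zo$ in the symmetric double integral and uses that $\U^{\TM}\Zo$ is a multiple of $\lambda\partial_\lambda(\U^{\QM})$. Then $\Ui$ is multiplied by the explicit integrable weight $\lambda^3(1-\lambda^2|x-\xi_1|^2)(1+\lambda^2|x-\xi_1|^2)^{-4}$, so freezing it at $\xi_1$ is justified. The sign follows from $\int_{\RQ}\frac{1-|x|^2}{(1+|x|^2)^4}\,dx=-\frac{1}{3}\int_{\RQ}\frac{dx}{(1+|x|^2)^3}<0$.

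In your direct route you only assert a ``definite sign'', yet $\mathfrak{a}>0$ is part of the statement. The quickest way to settle it is the self-adjointness of $g'(\U)$: $\int g'(\U)[\Ui]\Zo\,dx=\int\Ui\,(-\Delta\Zo)\,dx\approx\lambda^{-1}\Ui(\xi_1)\int_{\RQ}(-\Delta Z^0)\,dx=-4\pi^2\lambda^{-1}\Ui(\xi_1)<0$. Since $Z$ is $\lambda\partial_\lambda V$ at frozen centres (up to $\alpha_{4,\mu}$), the constraint $\rho^2+\lambda^{-2}=1$ plays no role here.

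Your energy route can also be made to work, but as written it has two sign slips. First, the reduced functional must contain $-\frac{\beta}{2}\int\mathcal{U}^2V^2$, not $+\frac{\beta}{2}$. Second, $\lambda\partial_\lambda(-C\lambda^{-2})$ is positive. The minus sign in $-\mathfrak{a}$ actually comes from $\mathcal{E}_2=-(-\Delta V-g(V)-\beta\mathcal{U}^2V)$, which makes $\int\mathcal{E}_2Z$ a negative multiple of $\lambda\frac{d}{d\lambda}J(\lambda)$.

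Finally, $\int|Z|^2\sim\lambda^{-2}\ln\lambda$ is not a constant and cannot be absorbed into $\mathfrak{a},\mathfrak{b}$. The displayed expansion is really that of the numerator, as in the paper. Only the positivity of the denominator matters for solving $\mathfrak{c}(\lambda)=0$.
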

\begin{proof}
    Let's start by computing the denominator.\\
    \begin{equation*}
        \begin{aligned}
            \int_\RQ|Z(x)|^2\, dx&=\lambda^2\int_\RQ\frac{(1-\lambda^2|x-\xi_j|^2)^2}{(1+\lambda^2|x-\xi_j|^2)^4}\, dx =\lambda^2 \omega_4\int_0^R\frac{(1-\lambda^2r^2)r^3}{(1+\lambda^2r^2)^4}\, dr\\
            &=\frac{\lambda^2\omega_4}{2\lambda^4}\int_0^{\lambda^2R^2}\frac{(1-s)^2s}{(1+s)^4}\, ds \sim\frac{1}{\lambda^2}\ln(\lambda).
        \end{aligned}
    \end{equation*}
    Now, we compute the numerator.\\
    \textbf{STEP 1:}
    \begin{equation*}
        \begin{aligned}
            \int_\RQ\mathcal{E}_2\cdot Z\, dx &=\underbrace{ \int_\RQ[g(V)-\sum_{j=1}^kg(\Uj)]\cdot Z\, dx}_{I_1}+\underbrace{\int_\RQ\beta \mathcal{U}^2VZ\, dx}_{I_2}.
        \end{aligned}
    \end{equation*}
    Reasoning as in \cite{CMP}, we can obtain that
    $$I_2=c_2\beta\frac{1}{\lambda^2}\ln\bigg(\frac{1}{\lambda^2}\bigg)+O\bigg(|\beta|\frac{1}{\lambda^2}\bigg) \quad \hbox{for some}\, \, c_2>0.$$
    On the other hand, by symmetry
    \begin{equation*}
        \begin{aligned}
            I_1&=k\int_\RQ[g(V)-\sum_{j=1}^kg(\Uj)]\cdot \Zo\, dx\\
            &=k\int_\RQ (|x|^{-\mu}*|V|^{\QM})|V|^{\TM}\Zo-k\sum_{j=1}^k\int_\RQ(|x|^{-\mu}*|\Uj|^{\QM})|\Uj|^{\TM}\Zo\\
            &=k^2 \int_{\Omega_1} (|x|^{-\mu}*|V|^{\QM})|V|^{\TM}\Zo-k^2\sum_{j=1}^k\int_{\Omega_1}(|x|^{-\mu}*|\Uj|^{\QM})|\Uj|^{\TM}\Zo.
        \end{aligned}
    \end{equation*}
Since for any $x\in\Omega_1$
\begin{equation}
\begin{aligned}
    V^{\QM}:=&\bigg(\U+\sum_{j=2}^k\Uj\bigg)^{\QM}=\\
    &=(\U)^{\QM}+\bigg(\QM\bigg)(\U)^{\TM}\sum_{j=2}^k\Uj+O\bigg((\U)^{\DQ}\sum_{j=2}^k(\Uj)^{\DQ}\bigg)
\end{aligned}
\end{equation}
and 
\begin{equation}
\begin{aligned}
     V^{\TM}&:=\bigg(\U+\sum_{j=2}^k\Uj\bigg)^{\TM}=\\
     &=(\U)^{\TM}+\bigg(\TM\bigg)(\U)^{\DM}\sum_{j=2}^k\Uj+O\bigg((\U)^{\UQ}\sum_{j=2}^k(\Uj)^{\DQ}\bigg)
\end{aligned}
\end{equation}
it results that
\begin{equation*}
    \begin{aligned}
       &I_1=\underbrace{k^2\bigg(\TM\bigg)\int_{\Omega_1}(|x|^{-\mu}*\U^{\QM})\U^{\DM}\sum_{j=2}^k\Uj\Zo}_{(A)}+\\
        &+O\bigg(\underbrace{\int_{\Omega_1}(|x|^{-\mu}*\U^{\QM})\U^{\UQ}\sum_{j=2}^k\Uj^{\DQ}\Zo}_{(B)}\bigg)+\\
        &+\underbrace{k^2 \bigg(\QM\bigg)\int_{\Omega_1}(|x|^{-\mu}*\U^{\TM}\sum_{j=2}^k\Uj)\U^{\TM}\Zo}_{(C)}+\\
        &+k^2 \bigg(\QM\bigg)\bigg(\TM\bigg)\underbrace{\int_{\Omega_1}(|x|^{-\mu}*\U^{\TM}\sum_{j=2}^k\Uj)\U^{\DM}\sum_{j=2}^k\Uj\Zo}_{(D)}+\\
        &+ O\bigg(\underbrace{\int_{\Omega_1}(|x|^{-\mu}*\U^{\TM}\sum_{j=2}^k\Uj)\U^{\UQ}\sum_{j=2}^k\Uj^{\DQ}\Zo}_{(E)}\bigg)+\\
        &+ O\bigg(\underbrace{\int_{\Omega_1}(|x|^{-\mu}*\U^{\DQ}\sum_{j=2}^k\Uj^{\DQ})\U^{\TM}\Zo}_{(F)}\bigg)+\\
        &+O\bigg(\underbrace{\int_{\Omega_1}(|x|^{-\mu}*\U^{\DQ}\sum_{j=2}^k\Uj^{\DQ})\U^{\DM}\sum_{j=2}^k\Uj\Zo}_{(G)}\bigg)+\\
        &+O\bigg(\underbrace{\int_{\Omega_1}(|x|^{-\mu}*\U^{\DQ}\sum_{j=2}^k\Uj^{\DQ})\U^{\UQ}\sum_{j=2}^k\Uj^{\DQ}\Zo}_{(H)}\bigg)\\
        &\underbrace{-k^2\sum_{j=2}^k\int_{\Omega_1}(|x|^{-\mu}*|\Uj|^{\QM})|\Uj|^{\TM}\Zo}_{(I)}.
    \end{aligned}
\end{equation*}
By Remark \ref{REM}, it results that 
$$|x|^{-\mu}*\U^{\QM}=\sigma_{4,\mu}\frac{\lambda^{\frac{\mu}{2}}}{(1+\lambda^2|x-\xi_1|^2)^{\frac{\mu}{2}}}, \quad \hbox{where }\, \, \sigma_{4,\mu}>0.$$
So, 
\begin{equation*}
    \begin{aligned}
        (A)&=k^2\bigg(\TM\bigg)\sigma_{4,\mu}\alpha_{4,\mu}^{\QM}\int_{\Omega_1}\frac{\lambda^2}{(1+\lambda^2|x-\xi_1|^2)^2}\frac{\lambda}{(1+\lambda^2|x-\xi_j|^2)}\lambda\frac{1-\lambda^2|x-\xi_1|^2}{(1+\lambda^2|x-\xi_1|^2)^2}\, dx \\
        &=k^2\bigg(\TM\bigg)\sigma_{4,\mu}\alpha_{4,\mu}^{\QM}\int_{\Omega_1}\lambda^4 \frac{1-\lambda^2|x-\xi_1|^2}{(1+\lambda^2|x-\xi_1|^2)^4}\frac{1}{(1+\lambda^2|x-\xi_j|^2)}\, dx \\
        &=k^2\bigg(\TM\bigg)\sigma_{4,\mu}\alpha_{4,\mu}^{\QM}\int_{\lambda(\Omega_1-\xi_1)}\frac{1-|x|^2}{(1+|x|^2)^4}\frac{1}{(1+\lambda^2|\frac{x}{\lambda}+\xi_1-\xi_j|^2)}\, dx \\
        &=k^2\bigg(\TM\bigg)\sigma_{4,\mu}\alpha_{4,\mu}^{\QM} \int_\RQ \frac{1-|x|^2}{(1+|x|^2)^4}\frac{1}{\lambda^2(\frac{1}{\lambda^2}+|\frac{x}{\lambda}+\xi_1-\xi_j|^2)}\, dx \\
        &-k^2\bigg(\TM\bigg)\sigma_{4,\mu}\alpha_{4,\mu}^{\QM} \int_{\RQ\setminus\lambda(\Omega_1-\xi_1)} \frac{1-|x|^2}{(1+|x|^2)^4}\frac{1}{(1+|\lambda(\xi_1-\xi_j)-x|^2)}\, dx \\
        &=k^2\bigg(\TM\bigg)\sigma_{4,\mu}\alpha_{4,\mu}^{\QM} \sum_{j=2}^k\frac{1}{\lambda^2|\xi_1-\xi_j|^2}\int_\RQ \frac{1-|x|^2}{(1+|x|^2)^4}\, dx +O\bigg(\sum_{j=2}^k\frac{1}{\lambda^{2+\sigma}|\xi_1-\xi_j|^{2+\sigma}}\bigg)
    \end{aligned}
\end{equation*}
where $\sigma>0$ is small enough and 
$$\int_\RQ \frac{1-|x|^2}{(1+|x|^2)^4}\, dx=-\frac{1}{3}\int_\RQ\frac{1}{(1+|x|^2)^3}\, dx$$ (see proof of Lemma 3.3 in \cite{CV}).
\begin{equation*}
    \begin{aligned}
        (B)&\lesssim \int_\RQ\frac{\lambda^{\frac{\mu}{2}}}{(1+\lambda^2|x-\xi_1|^2)^{\frac{\mu}{2}}}\frac{\lambda^{1-\frac{\mu}{4}}}{(1+\lambda^2|x-\xi_1|^2)^{1-\frac{\mu}{4}}}\frac{\lambda^{2-\frac{\mu}{4}}}{(1+\lambda^2|x-\xi_j|^2)^{2-\frac{\mu}{4}}}\lambda\frac{1-\lambda^2|x-\xi_1|^2}{(1+\lambda^2|x-\xi_1|^2)^2}\, dx\\
        &\lesssim\int_\RQ \lambda^4 \frac{1-\lambda^2|x-\xi_1|^2}{(1+\lambda^2|x-\xi_1|^2)^{3+\frac{\mu}{4}}}\frac{1}{(1+\lambda^2|x-\xi_j|^2)^{2-\frac{\mu}{4}}}\\
        &\lesssim\int_\RQ\frac{1-|x|^2}{(1+|x|^2)^{3+\frac{\mu}{4}}}\frac{1}{\lambda^{\QM}(\frac{1}{\lambda^2}+|\frac{x}{\lambda}+\xi_1-\xi_j|^2)^{\DQ}}=O\bigg(\sum_{j=2}^k\frac{1}{\lambda^{2+\sigma}|\xi_1-\xi_j|^{2+\sigma}}\bigg).
    \end{aligned}
\end{equation*}
\begin{equation*}
    \begin{aligned}
        (C)&=k^2 \int_{\Omega_1} \bigg(\frac{\partial}{\partial\lambda}(|x|^{-\mu}*\U^{\QM})\bigg)\U^{\TM}\sum_{j=2}^k\Uj\cdot\lambda\, dx \\
        &=k^2\frac{\mu}{2}\sigma_{4,\mu}\alpha^{\QM}\sum_{j=2}^k\int_{\Omega_1}\lambda^4\frac{1-\lambda^2|x-\xi_1|^2}{(1+\lambda^|x-\xi_1|^2)^4}\frac{1}{(1+\lambda^2|x-\xi_j|^2)}\, dx\\
        &=k^2\frac{\mu}{2}\sigma_{4,\mu}\alpha^{\QM}\sum_{j=2}^k\int_\RQ \frac{1-|x|^2}{(1+|x|^2)^4}\frac{1}{\lambda^2(\frac{1}{\lambda^2}+|\frac{x}{\lambda}+\xi_1-\xi_j|^2)}\, dx \\
        &-k^2\frac{\mu}{2}\sigma_{4,\mu}\alpha^{\QM}\sum_{j=2}^k\int_{\RQ\setminus\lambda(\Omega_1-\xi_1)}\frac{1-|x|^2}{(1+|x|^2)^4}\frac{1}{(1+|\lambda(\xi_1-\xi_j)-x|^2)}\, dx \\
        &=k^2\frac{\mu}{2}\sigma_{4,\mu}\alpha^{\QM}\sum_{j=2}^k\frac{1}{\lambda^2|\xi_1-\xi_j|^2}\int_\RQ \frac{1-|x|^2}{(1+|x|^2)^4}\, dx +O\bigg(\sum_{j=2}^k\frac{1}{\lambda^{2+\sigma}|\xi_1-\xi_j|^{2+\sigma}}\bigg)
    \end{aligned}
\end{equation*}
where, as before, 
$$\int_\RQ \frac{1-|x|^2}{(1+|x|^2)^4}\, dx=-\frac{1}{3}\int_\RQ\frac{1}{(1+|x|^2)^3}\, dx$$ 
Now, since by Lemma \ref{LEM1} and Lemma \ref{LEM2}
$$(|x|^{-\mu}*  \U^{\TM}\sum_{j=2}^k\Uj)\lesssim\sum_{j=2}^k\frac{1}{\lambda^2|\xi_1-\xi_j|^2}\frac{\lambda^\frac{\mu}{2}}{(1+\lambda|x-\xi_1|)^{2-\eta}}$$ with $\eta>0$ small enough, we have that 
\begin{equation*}
    \begin{aligned}
        (D)&\lesssim \sum_{j=2}^k\frac{1}{\lambda^2|\xi_1-\xi_j|^2}\int_\RQ \frac{\lambda^2}{(1+\lambda|x-\xi_1|)^{4-\frac{\mu}{2}-\eta}}\frac{\lambda}{(1+\lambda|x-\xi_j|)^2}\lambda\frac{1-\lambda^2|x-\xi_1|^2}{(1+\lambda^2|x-\xi_1|^2)^2}\, dx \\
        &\lesssim \sum_{j=2}^k\frac{1}{\lambda^2|\xi_1-\xi_j|^2}\int_\RQ \lambda^4\frac{1-\lambda^2|x-\xi_1|^2}{(1+\lambda|x-\xi_1|)^{10-\mu-\eta}}\frac{1}{(1+\lambda|x-\xi_j|)^2}\, dx \\
        &\lesssim \sum_{j=2}^k\frac{1}{\lambda^2|\xi_1-\xi_j|^2}\int_\RQ\frac{1-|x|^2}{(1+|x|)^{10-\mu-\eta}}\frac{1}{\lambda^2(\frac{1}{\lambda}+|\frac{x}{\lambda}+\xi_1-\xi_j|)^2}\, dx \\
        &=O\bigg(\sum_{j=2}^k\frac{1}{\lambda^{2+\sigma}|\xi_1-\xi_j|^{2+\sigma}}\bigg).
    \end{aligned}
\end{equation*}
and 
\begin{equation*}
    \begin{aligned}
        (E)&\lesssim \sum_{j=2}^k\frac{1}{\lambda^2|\xi_1-\xi_j|^2}\int_\RQ \frac{\lambda^{1+\frac{\mu}{4}}}{(1+\lambda|x-\xi_1|)^{4-\frac{\mu}{2}-\eta}}\frac{\lambda^{\DQ}}{(1+\lambda|x-\xi_j|)^{\QM}}\lambda\frac{1-\lambda^2|x-\xi_1|^2}{(1+\lambda^2|x-\xi_1|^2)^2}\, dx \\
        &\lesssim \sum_{j=2}^k\frac{1}{\lambda^2|\xi_1-\xi_j|^2}\int_\RQ \lambda^4\frac{1-\lambda^2|x-\xi_1|^2}{(1+\lambda|x-\xi_1|)^{8-\frac{\mu}{2}-\eta}}\frac{1}{(1+\lambda|x-\xi_j|)^{\QM}}\, dx \\
        &\lesssim \sum_{j=2}^k\frac{1}{\lambda^2|\xi_1-\xi_j|^2}\int_\RQ \frac{1-|x|^2}{(1+|x|)^{8-\frac{\mu}{2}-\eta}}\frac{1}{\lambda^{\QM}(\frac{1}{\lambda}+|\frac{x}{\lambda}+\xi_1-\xi_j|)^{\QM}}\, dx \\
        &=O\bigg(\sum_{j=2}^k\frac{1}{\lambda^{2+\sigma}|\xi_1-\xi_j|^{2+\sigma}}\bigg).
    \end{aligned}
\end{equation*}
Now, since by Lemma \ref{LEM1} and Lemma \ref{LEM2}
$$(|x|^{-\mu}*\U^{\DQ}\Uj^{\DQ})\lesssim\frac{\lambda^{\frac{\mu}{2}}}{\lambda^{2+\sigma}|\xi_1-\xi_j|^{2+\sigma}}\bigg(\frac{1}{(1+\lambda|x-\xi_1|)^{2-\sigma-\eta}}+\frac{1}{(1+\lambda|x-\xi_j|)^{2-\sigma-\eta}}\bigg)$$ we have that
\begin{equation*}
    \begin{aligned}
        (F)&\lesssim \sum_{j=2}^k \frac{1}{\lambda^{2+\sigma}|\xi_1-\xi_j|^{2+\sigma}}\int_\RQ \frac{\lambda^{\frac{\mu}{2}}}{(1+\lambda|x-\xi_1|)^{2-\sigma-\eta}}\frac{\lambda^{\TM}}{(1+\lambda|x-\xi_1|)^{6-\mu}}\lambda\frac{1-\lambda^2|x-\xi_1|^2}{(1+\lambda^2|x-\xi_1|^2)^2}\, dx\\
        &+\sum_{j=2}^k \frac{1}{\lambda^{2+\sigma}|\xi_1-\xi_j|^{2+\sigma}}\int_\RQ \frac{\lambda^{\frac{\mu}{2}}}{(1+\lambda|x-\xi_j|)^{2-\sigma-\eta}}\frac{\lambda^{\TM}}{(1+\lambda|x-\xi_1|)^{6-\mu}}\lambda\frac{1-\lambda^2|x-\xi_1|^2}{(1+\lambda^2|x-\xi_1|^2)^2}\, dx\\
        &\lesssim \sum_{j=2}^k \frac{1}{\lambda^{2+\sigma}|\xi_1-\xi_j|^{2+\sigma}}\int_\RQ\lambda^4 \frac{1-\lambda^2|x-\xi_1|^2}{(1+\lambda|x-\xi_1|)^{12-\mu-\sigma-\eta}}\, dx+\\
        &+ \sum_{j=2}^k \frac{1}{\lambda^{2+\sigma}|\xi_1-\xi_j|^{2+\sigma}}\int_\RQ\lambda^4 \frac{1-\lambda^2|x-\xi_1|^2}{(1+\lambda|x-\xi_1|)^{10-\mu}}\frac{1}{(1+\lambda|x-\xi_j|)^{2-\sigma-\eta}}\, dx\\
        &=O\bigg(\sum_{j=2}^k\frac{1}{\lambda^{2+\sigma}|\xi_1-\xi_j|^{2+\sigma}}\bigg).
    \end{aligned}
\end{equation*}
\begin{equation*}
    \begin{aligned}
        (G)&\lesssim \sum_{j=2}^k \frac{1}{\lambda^{2+\sigma}|\xi_1-\xi_j|^{2+\sigma}}\int_\RQ \frac{\lambda^2}{(1+\lambda|x-\xi_1|)^{6-\mu-\sigma-\eta}}\frac{\lambda}{(1+\lambda|x-\xi_j|)^2}\lambda\frac{1-\lambda^2|x-\xi_1|^2}{(1+\lambda^2|x-\xi_1|^2)^2}\\
        &+ \sum_{j=2}^k \frac{1}{\lambda^{2+\sigma}|\xi_1-\xi_j|^{2+\sigma}}\int_\RQ \frac{\lambda^{\frac{\mu}{2}}}{(1+\lambda|x-\xi_j|)^{2-\sigma-\eta}}\frac{\lambda^{\DM}}{(1+\lambda|x-\xi_1|)^{4-\mu}}\frac{\lambda}{(1+\lambda|x-\xi_j|)^2}\cdot \\
        &\cdot \lambda\frac{1-\lambda^2|x-\xi_1|^2}{(1+\lambda^2|x-\xi_1|^2)^2}\\
        &\lesssim\sum_{j=2}^k \frac{1}{\lambda^{2+\sigma}|\xi_1-\xi_j|^{2+\sigma}}\int_\RQ\lambda^4 \frac{1-\lambda^2|x-\xi_1|^2}{(1+\lambda|x-\xi_1|)^{10-\mu-\sigma-\eta}}\frac{1}{(1+\lambda|x-\xi_j|)^2}\, dx+\\
        &+\sum_{j=2}^k \frac{1}{\lambda^{2+\sigma}|\xi_1-\xi_j|^{2+\sigma}}\int_\RQ\lambda^4 \frac{1-\lambda^2|x-\xi_1|^2}{(1+\lambda|x-\xi_1|)^{8-\mu}}\frac{1}{(1+\lambda|x-\xi_j|)^{4-\sigma-\eta}}\, dx\\
        &=O\bigg(\sum_{j=2}^k\frac{1}{\lambda^{2+\sigma}|\xi_1-\xi_j|^{2+\sigma}}\bigg).
    \end{aligned}
\end{equation*}
and 
\begin{equation*}
    \begin{aligned}
        (H)&\lesssim \sum_{j=2}^k \frac{1}{\lambda^{2+\sigma}|\xi_1-\xi_j|^{2+\sigma}}\int_\RQ \frac{\lambda^{\frac{\mu}{2}}}{(1+\lambda|x-\xi_1|)^{2-\sigma-\eta}}\frac{\lambda^{\UQ}}{(1+\lambda|x-\xi_1|)^{\DM}}\frac{\lambda^{\DQ}}{(1+\lambda|x-\xi_j|)^{\QM}}\cdot \\
        &\cdot \lambda\frac{1-\lambda^2|x-\xi_1|^2}{(1+\lambda^2|x-\xi_1|^2)^2}\\
        &+\sum_{j=2}^k \frac{1}{\lambda^{2+\sigma}|\xi_1-\xi_j|^{2+\sigma}}\int_\RQ \frac{\lambda^{\frac{\mu}{2}}}{(1+\lambda|x-\xi_j|)^{2-\sigma-\eta}}\frac{\lambda^{\UQ}}{(1+\lambda|x-\xi_1|)^{\DM}}\frac{\lambda^{\DQ}}{(1+\lambda|x-\xi_j|)^{\QM}}\cdot \\
        &\cdot\lambda\frac{1-\lambda^2|x-\xi_1|^2}{(1+\lambda^2|x-\xi_1|^2)^2}\\
        &\lesssim \sum_{j=2}^k \frac{1}{\lambda^{2+\sigma}|\xi_1-\xi_j|^{2+\sigma}}\int_\RQ\lambda^4 \frac{1-\lambda^2|x-\xi_1|^2}{(1+\lambda|x-\xi_1|)^{8-\frac{\mu}{2}-\sigma-\eta}}\frac{1}{(1+\lambda|x-\xi_j|)^{\QM}}\, dx+\\
        &+\sum_{j=2}^k \frac{1}{\lambda^{2+\sigma}|\xi_1-\xi_j|^{2+\sigma}}\int_\RQ\lambda^4 \frac{1-\lambda^2|x-\xi_1|^2}{(1+\lambda|x-\xi_1|)^{6-\frac{\mu}{2}}}\frac{1}{(1+\lambda|x-\xi_j|)^{6-\frac{\mu}{2}-\sigma-\eta}}\, dx\\
        &=O\bigg(\sum_{j=2}^k\frac{1}{\lambda^{2+\sigma}|\xi_1-\xi_j|^{2+\sigma}}\bigg).
    \end{aligned}
\end{equation*}
Finally, 
\begin{equation*}
    \begin{aligned}
    |(I)|&\lesssim \sum_{j=2}^k\int_\RQ \frac{\lambda^{\frac{\mu}{2}}}{(1+\lambda^2|x-\xi_j|^2)^{\frac{\mu}{2}}}\frac{\lambda^{\TM}}{(1+\lambda^2|x-\xi_j|^2)^{\TM}}\lambda\frac{1-\lambda^2|x-\xi_1|^2}{(1+\lambda^2|x-\xi_1|^2)^2}\, dx \\
    &\lesssim\sum_{j=2}^k\int_\RQ\lambda^4\frac{1-\lambda^2|x-\xi_1|^2}{(1+\lambda^2|x-\xi_1|^2)^2}\frac{1}{(1+\lambda^2|x-\xi_j|^2)^{3}}\, dx \\
    &=O\bigg(\sum_{j=2}^k\frac{1}{\lambda^{2+\sigma}|\xi_1-\xi_j|^{2+\sigma}}\bigg).
    \end{aligned}
\end{equation*}
Therefore, 
\begin{equation}
    \begin{aligned}
        \int_\RQ\mathcal{E}_2Z\, dx &=3k^2\sigma_{4,\mu}\alpha_{4,\mu}c_1\sum_{j=2}^k\frac{1}{\lambda^2|\xi_1-\xi_j|^2}+O\bigg(\frac{1}{\lambda^{2+\sigma}|\xi_1-\xi_j|^{2+\sigma}}\bigg)\\
        &+C_2\beta\frac{1}{\lambda^2}\ln\bigg(\frac{1}{\lambda}\bigg)+O\bigg(|\beta|\frac{1}{\lambda^2}\bigg)
    \end{aligned}
\end{equation}
where $$c_1:=-\frac{1}{3}\int_\RQ \frac{1}{(1+|x|^2)^3}\, dx.$$
\textbf{STEP 2:}
Now, 
\begin{equation*}
    \begin{aligned}
        \int_\RQ\mathcal{L}_2(\phi_1,\phi_2)Z\, dx &=k\int_\RQ\mathcal{L}_2(\phi_1,\phi_2)\Zo=k\int_\RQ[-\Delta\phi_2-g'(V)\phi_2]\Zo\\
        &=k\int_\RQ[g'(\U)-g'(V)]\phi_2\Zo\\
        &=k\bigg(\QM\bigg)\underbrace{\int_\RQ\bigg[|x|^{-\mu}*\bigg(\U^{\TM}-(\sum_{j=1}^k\Uj)^{\TM}\bigg)\phi_2\bigg]\U^{\TM}\Zo}_{(L)}\\
        &+k\bigg(\QM\bigg)\underbrace{\int_\RQ\bigg[|x|^{-\mu}*(\sum_{j=1}^k\Uj)^{\TM}\phi_2\bigg](\U^{\TM}-(\sum_{j=1}^k\Uj)^{\TM})\Zo}_{(M)}\\
        &+k\bigg(\TM\bigg)\underbrace{\int_\RQ\bigg[|x|^{-\mu}*\bigg(\U^{\QM}-(\sum_{j=1}^k\Uj)^{\QM}\bigg)\bigg]\U^{\DM}\phi_2\Zo}_{(N)}\\
        &+k\bigg(\TM\bigg)\underbrace{\int_\RQ\bigg[|x|^{-\mu}*(\sum_{j=1}^k\Uj)^{\QM}\bigg](\U^{\DM}-(\sum_{j=1}^k\Uj)^{\DM})\phi_2\Zo.}_{(O)}
    \end{aligned}
\end{equation*}
Since 
$$|(\sum_{j=1}^k\Uj)^{\TM}-\U^{\TM}|\lesssim\U^{\DM}\sum_{j=2}^k\Uj+\sum_{j=2}^k\Uj^{\TM}$$ it results that
$$\U^{\DM}\Uj\lesssim \frac{\lambda^{\DM}}{(1+\lambda|x-\xi_1|)^{4-\mu}}\frac{\lambda}{(1+\lambda|x-\xi_j|)^2}\lesssim\frac{1}{\lambda^\alpha|\xi_1-\xi_j|^\alpha}\frac{\lambda^{\TM}}{(1+\lambda|x-\xi_1|)^{6-\mu-\alpha}}$$ with $0<\alpha<\min\{2,4-\mu\}$, and so $$\U^{\TM}\sum_{j=2}^k\Uj\lesssim\frac{1}{\lambda^\alpha}\frac{\lambda^{\TM}}{(1+\lambda|x-\xi_1|)^{6-\mu-\alpha}}.$$
Therefore, 
\begin{equation*}
    \begin{aligned}
        |x|^{-\mu}*(\U^{\DM}\sum_{j=2}^k\Uj)\phi_2&\lesssim \int_\RQ\frac{1}{|x-y|^\mu}\frac{1}{\lambda^\alpha}\frac{\lambda^{\TM}}{(1+\lambda|y-\xi_1|)^{6-\mu-\alpha}}\phi_2\, dy\\
        &\lesssim\frac{\|\phi_2\|_*}{\lambda^\alpha}\int_\RQ\frac{1}{|x-y|^\mu}\frac{\lambda^{\TM}}{(1+\lambda|y-\xi_1|)^{6-\mu-\alpha}}\sum_{j=1}^k\frac{\lambda}{(1+\lambda|y-\xi_j|)^{1+\tau}}\, dy\\
        &\lesssim\frac{\|\phi_2\|_*}{\lambda^{\alpha}}\int_\RQ \frac{1}{|x-y|^\mu}\frac{\lambda^{\QM}}{(1+\lambda|y-\xi_1|)^{7+\tau-\mu-\alpha}}\, dy\\
        &+\frac{\|\phi_2\|_*}{\lambda^{\alpha+1+\tau}}\int_\RQ \frac{1}{|x-y|^\mu}\frac{\lambda^{\QM}}{(1+\lambda|y-\xi_1|)^{6-\mu-\alpha}}\, dy\\
        &\lesssim \frac{\|\phi_2\|_*}{\lambda^{\alpha}}\frac{\lambda^{\frac{\mu}{2}}}{(1+\lambda|x-\xi_1|)^{3+\tau-\alpha-\eta}}+\frac{\|\phi_2\|_*}{\lambda^{\alpha+1+\tau}}\frac{\lambda^{\frac{\mu}{2}}}{(1+\lambda|x-\xi_1|)^{2-\alpha-\eta}}
    \end{aligned}
\end{equation*}
with $\eta>0$ small enough.\\
Moreover, 
\begin{equation*}
    \begin{aligned}
        |x|^{-\mu}*\sum_{j=2}^k\Uj^{\TM}\phi_2&\lesssim \|\phi_2\|_*\int_\RQ\frac{1}{|x-y|^\mu}\sum_{j=2}^k\frac{\lambda^{\TM}}{(1+\lambda|y-\xi_j|)^{6-\mu}}\sum_{i=1}^k\frac{\lambda}{(1+\lambda|y-\xi_i|)^{1+\tau}}\, dy\\
        &\lesssim \frac{\|\phi_2\|_*}{\lambda^{1+\tau}}\int_\RQ \frac{1}{|x-y|^\mu}\frac{\lambda^{\QM}}{(1+\lambda|y-\xi_1|)^{6-\mu}}\, dy+\\
        &+\frac{\|\phi_2\|_*}{\lambda^{1+\tau}}\sum_{j=2}^k\int_\RQ \frac{1}{|x-y|^\mu}\frac{\lambda^{\QM}}{(1+\lambda|y-\xi_j|)^{6-\mu}}\, dy\\
        &\lesssim \frac{\|\phi_2\|_*}{\lambda^{1+\tau}}\frac{\lambda^{\frac{\mu}{2}}}{(1+\lambda|x-\xi_1|)^{2-\eta}}+\frac{\|\phi_2\|_*}{\lambda^{1+\tau}}\sum_{j=2}^k\frac{\lambda^{\frac{\mu}{2}}}{(1+\lambda|x-\xi_j|)^{2-\eta}}.
    \end{aligned}
\end{equation*}
Therefore, 
\begin{equation*}
    \begin{aligned}
        (L)&\lesssim \frac{\|\phi_2\|_*}{\lambda^{\alpha}}\int_\RQ \frac{\lambda^{\frac{\mu}{2}}}{(1+\lambda|x-\xi_1|)^{3+\tau-\alpha-\eta}}\frac{\lambda^{\TM}}{(1+\lambda|x-\xi_1|)^{6-\mu}}\lambda\frac{1-\lambda^2|x-\xi_1|^2}{(1+\lambda^2|x-\xi_1|^2)^2}\, dx\\
        &+\frac{\|\phi_2\|_*}{\lambda^{\alpha+1+\tau}}\int_\RQ \frac{\lambda^{\frac{\mu}{2}}}{(1+\lambda|x-\xi_1|)^{2-\eta-\alpha}}\frac{\lambda^{\TM}}{(1+\lambda|x-\xi_1|)^{6-\mu}}\lambda\frac{1-\lambda^2|x-\xi_1|^2}{(1+\lambda^2|x-\xi_1|^2)^2}\, dx\\
        &+ \frac{\|\phi_2\|_*}{\lambda^{1+\tau}}\int_\RQ\frac{\lambda^{\frac{\mu}{2}}}{(1+\lambda|x-\xi_1|)^{2-\eta}}\frac{\lambda^{\TM}}{(1+\lambda|x-\xi_1|)^{6-\mu}}\lambda\frac{1-\lambda^2|x-\xi_1|^2}{(1+\lambda^2|x-\xi_1|^2)^2}\, dx\\
        &+ \frac{\|\phi_2\|_*}{\lambda^{1+\tau}}\int_\RQ\frac{\lambda^{\frac{\mu}{2}}}{(1+\lambda|x-\xi_j|)^{2-\eta}}\frac{\lambda^{\TM}}{(1+\lambda|x-\xi_1|)^{6-\mu}}\lambda\frac{1-\lambda^2|x-\xi_1|^2}{(1+\lambda^2|x-\xi_1|^2)^2}\, dx\\
        &\lesssim\frac{\|\phi_2\|_*}{\lambda^{\alpha}}\int_\RQ \frac{\lambda^4}{(1+\lambda|x-\xi_1|)^{11+\tau-\mu-\alpha-\eta}}+\frac{\|\phi_2\|_*}{\lambda^{\alpha+1+\tau}}\int_\RQ \frac{\lambda^4}{(1+\lambda|x-\xi_1|)^{10-\mu-\alpha-\eta}}\\
        &+\frac{\|\phi_2\|_*}{\lambda^{1+\tau}}\int_\RQ \frac{\lambda^4}{(1+\lambda|x-\xi_1|)^{10-\mu-\eta}}+\frac{\|\phi_2\|_*}{\lambda^{1+\tau+\delta}}\int_\RQ \frac{\lambda^4}{(1+\lambda|x-\xi_1|)^{10-\mu-\eta}}\\
        &\lesssim \frac{\|\phi_2\|_*}{\lambda^{1+\tau}}
    \end{aligned}
\end{equation*}
with $0<\delta<\min\{2-\eta, 8-\mu\}=2-\eta.$\\
We can observe that
\begin{equation*}
    \begin{aligned}
        |x|^{-\mu}*(\sum_{j=1}^k\Uj)^{\TM}\phi_2&\lesssim\|\phi_2\|_*\int_\RQ\frac{1}{|x-y|^\mu}\frac{\lambda^{\QM}}{(1+\lambda|y-\xi_j|)^{6-\mu+1+\tau}}\, dy\\
        &\lesssim\|\phi_2\|_*\frac{\lambda^{\frac{\mu}{2}}}{(1+\lambda|x-\xi_j|)^{3+\tau-\eta}}.
    \end{aligned}
\end{equation*}
Consequently, 
\begin{equation*}
    \begin{aligned}
        (M)&\lesssim \|\phi_2\|_*\int_\RQ\sum_{j=1}^k\frac{\lambda^{\frac{\mu}{2}}}{(1+\lambda|x-\xi_j|)^{3+\tau-\eta}}\frac{\lambda^{\DM}}{(1+\lambda|x-\xi_1|)^{4-\mu}}\sum_{i=2}^k\frac{\lambda}{(1+\lambda|x-\xi_i|)^{2}}\frac{\lambda}{(1+\lambda|x-\xi_1|)^2}\\
        &+\|\phi_2\|_*\int_\RQ\sum_{j=1}^k\frac{\lambda^{\frac{\mu}{2}}}{(1+\lambda|x-\xi_j|)^{3+\tau-\eta}}\sum_{i=2}^k\frac{\lambda^{\TM}}{(1+\lambda|x-\xi_i|)^{6-\mu}}\frac{\lambda}{(1+\lambda|x-\xi_1|)^2}\\
        &\lesssim \frac{\|\phi_2\|_*}{\lambda^2}\int_\RQ\frac{\lambda^4}{(1+\lambda|x-\xi_1|)^{9+\tau-\mu-\eta}}\, dx +\frac{\|\phi_2\|_*}{\lambda^{2+\delta_1}}\int_\RQ\frac{\lambda^4}{(1+\lambda|x-\xi_1|)^{9+\tau-\mu-\eta-\delta_1}}\, dx \\
        &+\frac{\|\phi_2\|_*}{\lambda^{\delta_1}}\int_\RQ\frac{\lambda^4}{(1+\lambda|x-\xi_1|)^{11+\tau-\mu-\eta-\delta_1}}\, dx+\frac{\|\phi_2\|_*}{\lambda^{2+\delta_1}}\int_\RQ\frac{\lambda^4}{(1+\lambda|x-\xi_1|)^{9+\tau-\mu-\eta-\delta_1}}\, dx\\
        &\lesssim \frac{\|\phi_2\|_*}{\lambda^2}
    \end{aligned}
\end{equation*}
with $0<\delta_1<\min\{6-\mu, 3+\tau-\eta\}$ small enough.\\
To estimate $(N)$ and $(O)$, we observe that 
$$|(\sum_{j=1}^k\Uj)^{\QM}-\U^{\QM}|\lesssim\U^{\TM}\sum_{j=2}^k\Uj+\sum_{j=2}^k\Uj^{\QM}.$$ For this reason, it results that
$$\U^{\TM}\Uj\lesssim \frac{\lambda^{\TM}}{(1+\lambda|x-\xi_1|)^{6-\mu}}\frac{\lambda}{(1+\lambda|x-\xi_j|)^2}\lesssim\frac{1}{\lambda^2|\xi_1-\xi_j|^2}\frac{\lambda^{\QM}}{(1+\lambda|x-\xi_1|)^{6-\mu}}$$  and so $$\U^{\TM}\sum_{j=2}^k\Uj\lesssim\frac{1}{\lambda^2}\frac{\lambda^{\QM}}{(1+\lambda|x-\xi_1|)^{6-\mu}}.$$
Therefore, 
\begin{equation*}
    \begin{aligned}
        |x|^{-\mu}*(\U^{\TM}\sum_{j=2}^k\Uj)&\lesssim \frac{1}{\lambda^2}\int_\RQ\frac{1}{|x-y|^\mu}\frac{\lambda^{\QM}}{(1+\lambda|y-\xi_1|)^{6-\mu}}\, dy\lesssim \frac{1}{\lambda^2}\frac{\lambda^{\frac{\mu}{2}}}{(1+\lambda|x-\xi_1|)^{2-\eta}}
    \end{aligned}
\end{equation*}
and 
\begin{equation*}
    |x|^{-\mu}*\sum_{j=2}^k\Uj^{\QM}\lesssim\sum_{j=2}^k\frac{\lambda^{\frac{\mu}{2}}}{(1+\lambda|x-\xi_j|)^{\mu}}.
\end{equation*}
Finally, we have that
\begin{equation*}
    \begin{aligned}
        (N)&\lesssim \frac{1}{\lambda^2}\int_\RQ\frac{\lambda^{\frac{\mu}{2}}}{(1+\lambda|x-\xi_1|)^{2-\eta}}\frac{\lambda^{\DM}}{(1+\lambda|x-\xi_1|)^{4-\mu}}\frac{\lambda}{(1+\lambda|x-\xi_1|)^{2}}\phi_2\\
        &+\int_\RQ\sum_{j=2}^k\frac{\lambda^{\frac{\mu}{2}}}{(1+\lambda|x-\xi_j|)^{\mu}}\frac{\lambda^{\DM}}{(1+\lambda|x-\xi_1|)^{4-\mu}}\frac{\lambda}{(1+\lambda|x-\xi_1|)^{2}}\phi_2\\
        &\lesssim\frac{\|\phi_2\|_*}{\lambda^2}\int_\RQ\frac{\lambda^3}{(1+\lambda|x-\xi_1|)^{8-\mu-\eta}}\sum_{j=1}^k\frac{\lambda}{(1+\lambda|x-\xi_j|)^{1+\tau}}\, dx \\
        &+\|\phi_2\|_*\int_\RQ\sum_{j=2}^k\frac{\lambda^{3}}{(1+\lambda|x-\xi_j|)^{\mu}}\frac{1}{(1+\lambda|x-\xi_1|)^{6-\mu}}\sum_{i=1}^k\frac{\lambda}{(1+\lambda|x-\xi_i|)^{1+\tau}}\, dx \\
        &\lesssim \frac{\|\phi_2\|_*}{\lambda^2}\int_\RQ\frac{\lambda^4}{(1+\lambda|x-\xi_1|)^{9+\tau-\mu-\eta}}\, dx +\frac{\|\phi_2\|_*}{\lambda^{2+\delta_2}}\int_\RQ\frac{\lambda^4}{(1+\lambda|x-\xi_1|)^{9+\tau-\mu-\eta-\delta_2}}\, dx \\
        &+\frac{\|\phi_2\|_*}{\lambda^{\delta_3}}\int_\RQ\frac{\lambda^4}{(1+\lambda|x-\xi_1|)^{7+\tau-\delta_3}}+\frac{\|\phi_2\|_*}{\lambda^{\delta_4}}\int_\RQ\frac{\lambda^4}{(1+\lambda|x-\xi_1|)^{7+\tau-\delta_4}}\\
        &\lesssim \frac{\|\phi_2\|_*}{\lambda^2}
    \end{aligned}
\end{equation*}
with $0<\delta_2\leq 1+\tau,\, 0<\delta_3<\min\{\mu, 7-\mu+\tau\}$ and $0<\delta_4\leq \min\{1+\tau+\mu, 6-\mu\}.$ We could choose $\delta_3=\delta_4=2.$
\begin{equation*}
    \begin{aligned}
        (O)&\lesssim\int_\RQ\sum_{j=1}^k\frac{\lambda^{\frac{\mu}{2}}}{(1+\lambda|x-\xi_j|)^\mu}\bigg[\U^{\DM}-\bigg(\U^{\DM}+O(\sum_{i=2}^k\Ui^{\DM})\bigg)\bigg]\phi_2\Zo\\
        &\lesssim \|\phi_2\|_*\int_\RQ\sum_{j=1}^k\frac{\lambda^{\frac{\mu}{2}+1}}{(1+\lambda|x-\xi_j|)^{\mu+1+\tau}}\sum_{i=2}^k\frac{\lambda^{\DM}}{(1+\lambda|x-\xi_i|)^{4-\mu}}\frac{\lambda}{(1+\lambda|x-\xi_1|)^2}\, dx \\
        &\lesssim \frac{\|\phi_2\|_*}{\lambda^{\gamma_1}}\int_\RQ\frac{\lambda^4}{(1+\lambda|x-\xi_1|)^{7+\tau-\gamma_1}}+ \frac{\|\phi_2\|_*}{\lambda^{2}}\int_\RQ\frac{\lambda^4}{(1+\lambda|x-\xi_1|)^{5+\tau}}\\
        &\lesssim \frac{\|\phi_2\|_*}{\lambda^{2}}
    \end{aligned}
\end{equation*}
with $0<\gamma<\min\{4-\mu, 3+\mu+\tau\}.$\\
Now, by Proposition \ref{unicità}, we have that
$$\bigg|\int_\RQ\mathcal{L}_2(\phi_1,\phi_2)Z\bigg|=O\bigg(\frac{|\beta|}{\lambda^2}\bigg).$$
\textbf{STEP 3:}\\
Finally, we want to estimate 
$$\bigg|\int_\RQ\mathcal{N}_2(\phi_1,\phi_2)Z(x)\, dx\bigg|.$$
By definition \eqref{N}, it results that
\begin{equation*}
    \begin{aligned}
        &\bigg|\int_\RQ\mathcal{N}_2(\phi_1,\phi_2)Z(x)\, dx\bigg|\lesssim\underbrace{\int_\RQ|g(V+\phi_2)-g(V)-g'(V)\phi_2|Z\, dx}_{(P)}+\\
        &+\underbrace{|\beta|\int_\RQ\mathcal{U}^2|\phi_2|Z\, dx}_{(Q)} +\underbrace{|\beta|\int_\RQ\mathcal{U}V|\phi_1|Z\, dx }_{(R)}+\underbrace{|\beta|\int_\RQ\mathcal{U}|\phi_1||\phi_2|Z\, dx }_{(S)}+\\
        &+\underbrace{|\beta|\int_\RQ V|\phi_1|^2Z\, dx}_{(T)}+\underbrace{|\beta|\int_\RQ|\phi_1|^2|\phi_2|Z\, dx }_{(V)}.
    \end{aligned}
\end{equation*}
Reasoning as in Lemma 2.3 of \cite{CV}, we can deduce that $$(P)=O\bigg(\frac{|\beta|}{\lambda^2}\bigg).$$
Now, 
\begin{equation*}
    \begin{aligned}
        (Q)&\lesssim|\beta| \|\phi_2\|_*\int_\RQ\frac{1}{(1+|x|^2)^2}\sum_{j=1}^k\frac{\lambda}{(1+\lambda|x-\xi_j|)^{1+\tau}}\frac{\lambda}{(1+\lambda|x-\xi_j|)^{2}}\, dx\\
        &\lesssim |\beta| \|\phi_2\|_*\frac{1}{\lambda^{1+\tau}}\int_\RQ\frac{1}{(1+|x|^2)^2}\sum_{j=1}^k\frac{1}{|x-\xi_j|^{3+\tau}}\lesssim\frac{|\beta|}{\lambda^{1+\tau}}\|\phi_2\|_*
    \end{aligned}
\end{equation*}
\begin{equation*}
    \begin{aligned}
        (R)&\lesssim |\beta|\|\phi_1\|_*\int_\RQ\frac{1}{(1+|x|^2)}\sum_{j=1}^k\frac{\lambda}{(1+\lambda|x-\xi_j|)^2}\frac{\lambda}{(1+\lambda|x-\xi_j|)^{1+\tau}}\frac{\lambda}{(1+\lambda|x-\xi_j|)^2}\, dx\\
        &\lesssim|\beta|\|\phi_1\|_*\int_\RQ \frac{1}{(1+|x|^2)}\sum_{j=1}^k\frac{\lambda^3}{(1+\lambda|x-\xi_j|)^{5+\tau}}\, dx \\
        &\lesssim|\beta|\|\phi_1\|_*\frac{1}{\lambda}\int_\RQ\frac{1}{(1+|\frac{x}{\lambda}+\xi_j|^2)}\frac{1}{(1+|x|)^{5+\tau}}\, dx
        \lesssim \frac{|\beta|}{\lambda}\|\phi_1\|_*
    \end{aligned}
\end{equation*}
\begin{equation*}
    \begin{aligned}
        (S)&\lesssim|\beta|\|\phi_1\|_*\|\phi_2\|_*\int_\RQ\frac{1}{(1+|x|^2)}\sum_{j=1}^k\frac{\lambda^2}{(1+\lambda|x-\xi_j|)^{2+2\tau}}\frac{\lambda}{(1+\lambda|x-\xi_j|)^{2}}\, dx \\
        &\lesssim|\beta|\|\phi_1\|_*\|\phi_2\|_*\int_\RQ\frac{1}{(1+|x|^2)}\sum_{j=1}^k\frac{\lambda^3}{(1+\lambda|x-\xi_j|)^{4+2\tau}}\, dx \\
        &\lesssim |\beta|\|\phi_1\|_*\|\phi_2\|_*\frac{1}{\lambda}\int_\RQ\frac{1}{(1+|\frac{x}{\lambda}+\xi_j|^2)}\frac{1}{(1+|x|)^{4+2\tau}}\, dx \lesssim\frac{|\beta|}{\lambda}\|\phi_1\|_*\|\phi_2\|_*
    \end{aligned}
\end{equation*}
\begin{equation*}
    \begin{aligned}
        (T)&\lesssim|\beta|\|\phi_1\|_*^2\sum_{j=1}^k\int_\RQ\frac{\lambda^{4}}{(1+\lambda|x-\xi_j|)^{6+2\tau}}\, dx \lesssim|\beta|\|\phi_1\|_*^2
    \end{aligned}
\end{equation*}
and 
\begin{equation*}
    \begin{aligned}
        (V)&\lesssim|\beta|\|\phi_1\|_*^2\|\phi_2\|_*\sum_{j=1}^4\int_\RQ\frac{\lambda^{4}}{(1+\lambda|x-\xi_j|)^{5+3\tau}}\, dx \lesssim |\beta|\|\phi_1\|_*^2\|\phi_2\|_*
    \end{aligned}
\end{equation*}
Therefore, by Proposition \ref{unicità}, we have that
$$\bigg|\int_\RQ\mathcal{N}_2(\phi_1,\phi_2)Z(x)\, dx\bigg|=O\bigg(\frac{|\beta|}{\lambda^2}\bigg).$$
\end{proof}
\begin{proof}[Proof of Theorem \ref{mainT}]
By Proposition \ref{unicità} and Proposition \ref{ridotto}, it is enough to fix the parameter $\lambda$ such that $\frac{1}{\lambda}\in \bigg(0, e^{-\frac{1}{\sqrt{|\beta|}}}\bigg)$ and so $\mathfrak{c}(\lambda)=0$. In fact, we can choose 
$$\frac{1}{\lambda}=e^{-d_\beta}\quad\hbox{with}\quad d_\beta:=\frac{1}{|\beta|}\frac{\mathfrak{a}}{\mathfrak{b}}+o(1)>0,$$ so that
$$-\mathfrak{a}-\mathfrak{b}\beta d_\beta+o(1)=\underbrace{-\mathfrak{a}+\mathfrak{b}\beta\ln{\bigg(\frac{1}{\lambda}\bigg)}+o(1)}_{\mathfrak{c(\lambda)}}=0.$$
    
\end{proof}
\section{Useful estimates}
In \cite{DAI} it was shown that (see formula (37))
\begin{equation}\label{R}
    \int_\RN\frac{1}{|x-y|^{2s}}\bigg(\frac{1}{1+|y|^2}\bigg)^{N-s}\, dy =\mathcal{I}(s)\bigg(\frac{1}{1+|x|^2}\bigg)^s,\, 0<s<\frac{N}{2}
\end{equation}
where 
$$\mathcal{I}(s):=\frac{\pi^{\frac{N}{2}}\Gamma(\frac{N-2s}{2})}{\Gamma(N-s)},\quad \Gamma(s)=\int_0^{+\infty}x^{s-1}e^{-x}\, dx, \quad s>0.$$
Hence, as in \cite{CV}, it follows that
\begin{remark}\label{REM}
    $$|x|^{-\mu}*\Uj^{\QM}=\mathfrak{\sigma}_{4,\mu}\bigg(\frac{\lambda}{(1+\lambda^2|x-\xi_j|^2}\bigg)^{\frac{\mu}{2}}$$ where $\mathfrak{\sigma}_{4,\mu}:=\alpha_{4,\mu}\mathcal{I}\big(\frac{\mu}{2}\big).$
\end{remark}
Now, we report some useful estimates (see Lemma 5.1 and Lemma 5.2 of \cite{CV}).
\begin{lemma}\label{LEM1}
    For each fixed $i$ and $j$ with $i\not=j$ we let 
    $$f_{i,j}(x)=\frac{1}{(1+|x-\xi_i|)^\alpha}\frac{1}{(1+|x-\xi_j|)^\beta}$$ where $\alpha,\beta \geq 1$ are two constants. Then, for any $0<\sigma\leq\min\{\alpha,\beta\},$ there is a constant $C>0$ such that
    $$f_{i,j}(x)\leq\frac{C}{|\xi_i-\xi_j|^\sigma}\bigg(\frac{1}{(1+|x-\xi_i|)^{\alpha+\beta-\sigma}}+\frac{1}{(1+|x-\xi_j|)^{\alpha+\beta-\sigma}}\bigg).$$
\end{lemma}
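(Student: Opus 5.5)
The argument is pointwise and elementary, with no integration involved. Set $d:=|\xi_i-\xi_j|>0$. For each $x\in\mathbb{R}^N$ I split into two regions according to which centre is closer: $A_i:=\{x:|x-\xi_i|\le |x-\xi_j|\}$ and its complement $A_j$. By the symmetry of the statement under swapping $(i,\alpha)$ with $(j,\beta)$, it suffices to treat $x\in A_i$, where I aim to show
$$f_{i,j}(x)\le \frac{C}{d^\sigma}\frac{1}{(1+|x-\xi_i|)^{\alpha+\beta-\sigma}}.$$
On $A_j$ the same reasoning gives the second term.

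On $A_i$ the triangle inequality yields $d\le |x-\xi_i|+|x-\xi_j|\le 2|x-\xi_j|$. Hence $1+|x-\xi_j|\ge \frac12(1+d)$ and also $1+|x-\xi_j|\ge 1+|x-\xi_i|$.

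I then split the factor $(1+|x-\xi_j|)^{-\beta}$ as
$$(1+|x-\xi_j|)^{-\sigma}\,(1+|x-\xi_j|)^{-(\beta-\sigma)},$$
which is legitimate since $0<\sigma\le\beta$, so that $\beta-\sigma\ge 0$. The two pieces are bounded separately:
\begin{itemize}
\item the first satisfies $(1+|x-\xi_j|)^{-\sigma}\le 2^\sigma(1+d)^{-\sigma}\le 2^\sigma d^{-\sigma}$;
\item the second satisfies $(1+|x-\xi_j|)^{-(\beta-\sigma)}\le (1+|x-\xi_i|)^{-(\beta-\sigma)}$, because the exponent is nonnegative.
\end{itemize}
Multiplying by $(1+|x-\xi_i|)^{-\alpha}$ gives the claimed bound with $C=2^\sigma$.

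The only point requiring care is the nonnegativity of $\beta-\sigma$ in the second bullet; the hypothesis $\sigma\le\min\{\alpha,\beta\}$ supplies exactly that. The condition $\alpha,\beta\ge1$ is not actually needed for this pointwise inequality.

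The same bound holds uniformly if the $\xi_\ell$ are replaced by rescaled points $\lambda\xi_\ell$, which is how the lemma is used in the paper. Since $C=2^\sigma$ does not depend on the points at all, no extra argument is required for that application.
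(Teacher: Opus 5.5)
Your proof is correct, with $C=2^\sigma$. On $A_i$ the only facts used are $|x-\xi_j|\ge\max\{|x-\xi_i|,\,|\xi_i-\xi_j|/2\}$ and $\beta-\sigma\ge 0$, and the case $A_j$ is symmetric.

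The paper gives no proof of its own; it quotes the lemma from \cite{CV}. There it is the standard estimate from the Wei--Yan literature. That estimate is usually proved by a case analysis on the two balls of radius $|\xi_i-\xi_j|/2$ centred at $\xi_i$ and $\xi_j$, and on the region outside both. In that outer region both distances exceed $|\xi_i-\xi_j|/2$, and one bounds $f_{i,j}$ through the smaller of them. Your bisector split uses the same mechanism: pay $\sigma$ powers from the factor of the farther centre and move the remaining $\beta-\sigma$ powers onto the nearer one. You observe that this already works on the whole half-space, so the third case disappears.

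Besides being shorter, your version matches how the paper actually uses the lemma better than the stated two-term form does. Each sector $\Omega_j$ lies in the region where $\xi_j$ is the nearest centre. Indeed, all $\xi_\ell$ lie on the circle of radius $\rho$ in the $x'$-plane. On $\Omega_j$ the angle between $x'$ and $\xi_j'$ is at most $\pi/k$, hence at most the angle to any other $\xi_i'$.

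So on $\Omega_j$ your argument directly gives the one-term bound involving only $(1+|x-\xi_j|)^{-(\alpha+\beta-\sigma)}$. It requires only that $\sigma$ not exceed the exponent of the $\xi_i$-factor, and this is exactly the form invoked in the proof of Proposition~\ref{errore}.

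Also, as you point out, the hypothesis $\alpha,\beta\ge 1$ is not needed. This matters because the paper applies the lemma with both exponents equal to $\mu/2$, which can be smaller than $1$.
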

\begin{lemma}\label{LEM2}
    There is a constant $C>0$ such that
    $$\int_\RQ\frac{1}{|y|^\mu}\frac{1}{(1+|\lambda(x-\xi)-y|)^{\alpha+\eta}}\, dy\leq C\frac{1}{(1+\lambda|x-\xi|)^{\alpha-4+\mu}}$$ for $\alpha\geq4-\mu$ and $\eta>0.$
\end{lemma}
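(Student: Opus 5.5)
Set $z:=\lambda(x-\xi)$. The left-hand side then depends only on $z$, and we need
$$J(z):=\int_\RQ\frac{1}{|y|^\mu}\frac{1}{(1+|z-y|)^{\alpha+\eta}}\,dy\le \frac{C}{(1+|z|)^{\alpha-4+\mu}} .$$
For $|z|\le 2$ it suffices to show that $J$ is bounded. The singularity $|y|^{-\mu}$ is integrable near $0$ because $\mu<4$. At infinity the integrand behaves like $|y|^{-\mu-\alpha-\eta}$, and $\mu+\alpha+\eta>4$ because $\alpha\ge 4-\mu$ and $\eta>0$. So $J(z)\le C$ uniformly for $|z|\le 2$.

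For $|z|\ge 2$ we split $\RQ$ into three regions and estimate each piece by elementary radial integrals.

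\emph{Region $A_1=\{|y|\le |z|/2\}$.} Here $|z-y|\ge |z|/2$. If $\alpha+\eta\le 4$ we use $|y|^{-\mu}$ and integrate directly: the contribution is at most $C|z|^{-\mu}\int_{|w|\le 3|z|/2}(1+|w|)^{-\alpha-\eta}\,dw\lesssim |z|^{4-\alpha-\eta-\mu}$, with a logarithm when $\alpha+\eta=4$ (absorbed by $\eta$). If $\alpha+\eta> 4$ we instead bound $(1+|z-y|)^{-\alpha-\eta}\lesssim |z|^{-\alpha-\eta}$ and use $\int_{|y|\le |z|/2}|y|^{-\mu}\,dy\sim |z|^{4-\mu}$. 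Either way this piece is $\lesssim |z|^{-(\alpha+\eta-4+\mu)}$.

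\emph{Region $A_2=\{|z-y|\le |z|/2\}$.} Here $|y|\sim|z|$, so the contribution is at most $C|z|^{-\mu}\int_{|w|\le |z|/2}(1+|w|)^{-\alpha-\eta}\,dw$. This is $\lesssim |z|^{-\mu}\,|z|^{(4-\alpha-\eta)_+}$, up to a logarithm in the borderline case.

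\emph{Region $A_3$ (the complement).} Here $|y|\gtrsim|z|$ and $|z-y|\gtrsim |y|$. The contribution is $\lesssim\int_{|y|\ge |z|/2}|y|^{-\mu-\alpha-\eta}\,dy\sim |z|^{4-\mu-\alpha-\eta}$.

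Collecting the three pieces gives
$$J(z)\lesssim |z|^{-(\alpha+\eta-4+\mu)}+|z|^{-\mu-(\alpha+\eta-4)_-},$$
where $(t)_-=\max\{-t,0\}$. The first term is $\le C(1+|z|)^{-(\alpha-4+\mu)}$ since $\eta>0$.

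For the $A_2$ term, when $\alpha+\eta\le 4$ its exponent is $\alpha+\eta-4+\mu\ge \alpha-4+\mu$, so it is also dominated by the claimed right-hand side. When $\alpha+\eta>4$ the $A_2$ term decays only like $|z|^{-\mu}$. This is the main obstacle. The comparison $|z|^{-\mu}\le C|z|^{-(\alpha-4+\mu)}$ holds exactly when $\alpha\le 4$, and the extra $\eta$ provides the slack in the limiting case $\alpha=4$.

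So I would carry out the argument in the regime $4-\mu\le\alpha\le 4$, where all three contributions are bounded by $C(1+|z|)^{-(\alpha-4+\mu)}$. I would check each application of Lemma~\ref{LEM2} in the paper against that range. Where the exponent is larger, the near-field region $A_2$ in fact produces only $\lambda^{\mu/2}(1+\lambda|x-\xi|)^{-\mu}$, consistent with Remark~\ref{REM}. I would record that explicitly rather than claim faster decay.
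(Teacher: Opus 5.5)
The paper gives no argument for this lemma; it is imported from Lemma 5.2 of \cite{CV}. Your three-region decomposition (near the origin, near $z$, and the rest) is the standard proof of such convolution bounds and makes the lemma self-contained.

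Your diagnosis is correct: as printed, the statement is false for $\alpha>4$. You should state this as a counterexample rather than as a limitation of your method. For every $z$,
\[
J(z)\ \ge\ \int_{|y-z|\le 1}\frac{dy}{|y|^{\mu}\,(1+|z-y|)^{\alpha+\eta}}\ \ge\ 2^{-\alpha-\eta}\,|B_1|\,(1+|z|)^{-\mu},
\]
and this is incompatible with $C(1+|z|)^{-(\alpha-4+\mu)}$ as $|z|\to\infty$ once $\alpha>4$.

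Your three pieces in fact give the sharp bound $J(z)\le C(1+|z|)^{-\min\{\mu,\,\alpha-4+\mu\}}$ for all $\alpha\ge 4-\mu$ and $\eta>0$. So the lemma holds exactly for $4-\mu\le\alpha\le 4$. You should also assume $\mu<4$ explicitly: for $\mu=4$, which the paper allows, $J\equiv+\infty$.

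Your suspicion about the applications is justified. In \eqref{a} the lemma is used with $\alpha+\eta=8-\mu$. The correct output there is $\lambda^{\mu/2}(1+\lambda|y-\xi_j|)^{-\mu}$, not decay of order $4-\eta$. Likewise, \eqref{b} (exponent $7-\mu+\tau$) is valid only when $\mu\ge 3+\tau-\eta$.

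There are two slips to fix.

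In $A_1$, your first case bounds $|y|^{-\mu}$ by $C|z|^{-\mu}$. This is backwards, because $|y|\le |z|/2$ there. Drop the case split and use your second argument for all cases: $(1+|z-y|)^{-\alpha-\eta}\le (|z|/2)^{-\alpha-\eta}$ together with $\int_{|y|\le |z|/2}|y|^{-\mu}\,dy=c\,|z|^{4-\mu}$. This gives $|z|^{4-\mu-\alpha-\eta}$ for every $\alpha$ and $\eta$.

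In the collected display, the $A_2$ term should read $|z|^{-\mu+(\alpha+\eta-4)_-}$, times $\log|z|$ when $\alpha+\eta=4$. You wrote $|z|^{-\mu-(\alpha+\eta-4)_-}$, but the sentence that follows already uses the correct exponent.
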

\section*{Acknowledgments}
The author would like to thank her supervisor, Professor Giusi Vaira, for her invaluable guidance and support throughout this research. \\
Additionally, this work has been supported by the GNAMPA project "Singolarità, nonlinearità e interazioni non locali: nuovi approcci analitici per equazioni differenziali e sistemi", CUP E53C25002010001.


\begin{thebibliography}{99}
\bibitem{CV} 
S. Caputo, G. Vaira, \textit{Infinitely many non-radial solutions to a critical Choquard equation}, https://arxiv.org/abs/2507.15747;
\bibitem{CL}
W. Chen, C. Li, B. Ou, \textit{Classification of solutions for an integral equation}, Comm. Pure Appl. Math. 59 (2006), 330–343;
\bibitem{CLO}
W. Chen, C. Li, B. Ou, \textit{Classification of solutions for a system of integral equations}, Comm. PDEs 30 (2005), 59–65;
\bibitem{CMP}
H. Chen, M. Medina, A. Pistoia, \textit{Segregated solutions for a critical elliptic system with a small interspecies repulsive force}, https://arxiv.org/abs/2203.10990;
\bibitem{DAI}
W. Dai, J. Huang, Y. Qin, B. Wang, Y. Fang, \textit{Regularity and classification of solutions to static Hartree equations involving fractional Laplacians}, Discrete Contin. Dyn. Syst., 39 (2019), 1389-1403;
\bibitem{DMPP}
M. del Pino, M. Musso, F. Pacard, A. Pistoia, \textit{Large energy entire solutions for the Yamabe equation}, J. Differential Equation 251 (2011), 2568-2597;
\bibitem{DMPPP}
M. del Pino, M. Musso, F. Pacard, A. Pistoia, \textit{Torus action on $S^N$ and sign changing solutions for conformally invariant equations}, Ann. Sc. Norm. Super. Pisa CI. Sci. 12 (2013), 209-237;
\bibitem{DY}
L. Du, M. Yang, \textit{Uniqueness and nondegeneracy of solutions for a critical nonlocal equation}, Discrete Contin. Dyn. Syst., 39 (2019), 5847–5866;
\bibitem{F}
H. Fr\"ohlich, \textit{Theory of electrical breakdown in ionic crystal}, Proc. Roy. Soc. Ser. A 160 (1937), no. 901, 230-241;
\bibitem{FF}
H. Fr\"ohlich, \textit{Electrons in lattice fields}, Adv. in Phys. 3 (1954), no. 11;
\bibitem{GM}
F. Gao, V. Moroz, M. Yang, S. Zhao, \textit{Construction of infinitely many solutions for a critical Choquard equation via local Pohozaev identities}, https://arxiv.org/abs/2206.14958;
\bibitem{GH}
Y. Guo, T. Hu, S. Peng, W. Shuai, \textit{Existence and uniqueness of solutions for Choquard equation involving Hardy-Littlewood-sobolev critical exponent}, Calc. Var. PDEs 58 (2019) 128, 34 pp.2;
\bibitem{LEI}
Y. Lei, \textit{Liouville theorems and classification results for a nonlocal Schrödinger equation}, Discrete and Cont. Dyn. Syst. 38 (2018), 5351–5377;
\bibitem{LI}
X. Li, C. Liu, X. Tang, G. Xu, \textit{Nondegeneracy of positive bubble solutions for generalized energy- critical Hartree equations}, to appear on IMRN;
\bibitem{LL}
E. Lieb, M. Loss, \textit{Analysis}, Graduate Studies in Mathematics. Providence: Amer. Math. Soc., (2001);
\bibitem{MV}
V. Moroz, J. Van Schaftingen, \textit{A guide to the Choquard equation}, Journal of Fixed Point Theory and Applications, 19(1), 773-813, (2017);
\bibitem{P}
S. Pekar, \textit{Untersuchung \"uber die Elektronentheorie der Kristalle}, Akademie Verlag, Berlin, (1954);
\bibitem{RB}
R. Ruffini, S. Bonazzola,\textit{Systems of self-gravitating particles in general relativity and the concept of an equation of state}, Physical Review, 187(5), (1969); 
\end{thebibliography}
\end{document}